\documentclass[11pt,reqno]{amsart}
\usepackage{amsmath,amssymb,amsthm,mathtools}
\usepackage{booktabs,tabularx}
\usepackage[margin=1in]{geometry}
\usepackage{xcolor}
\usepackage[colorlinks=true,linkcolor=blue!50!black,citecolor=blue!50!black,urlcolor=blue!50!black]{hyperref}

\hypersetup{
  pdftitle={Carlsson's Conjecture and the Generalized Total Rank Conjecture in Characteristic Two},
  pdfauthor={Keller VandeBogert},
  pdfsubject={2020 Mathematics Subject Classification: Primary 13D02; Secondary 13A35,
    13D22, 20C20, 55M35},
  pdfkeywords={generalized total rank conjecture, Carlsson's conjecture, differential modules,
    projective flags, finite group actions, Tate construction, Frobenius pullback}
}

\theoremstyle{plain}
\newtheorem{theorem}{Theorem}[section]
\newtheorem{proposition}[theorem]{Proposition}
\newtheorem{lemma}[theorem]{Lemma}

\newtheorem{thmintro}{Theorem}

\newtheorem{corintro}[thmintro]{Corollary}
\theoremstyle{definition}
\newtheorem{remark}[theorem]{Remark}
\newtheorem{example}[theorem]{Example}

\DeclareMathOperator{\Tor}{Tor}
\DeclareMathOperator{\length}{length}
\DeclareMathOperator{\rank}{rank}
\DeclareMathOperator{\im}{im}
\DeclareMathOperator{\id}{id}
\DeclareMathOperator{\codim}{codim}
\DeclareMathOperator{\height}{ht}
\DeclareMathOperator{\Supp}{Supp}
\newcommand{\Ff}{\mathbb{F}}
\newcommand{\Z}{\mathbb{Z}}
\newcommand{\Tate}{T}                 
\newcommand{\Frob}{F_R^{*}}           
\newcommand{\Rone}{R_F}               
\newcommand{\htot}{h}

\title[Carlsson's conjecture and generalized total ranks]
      {Carlsson's Conjecture and the Generalized Total Rank Conjecture in Characteristic Two}
\author{Keller VandeBogert}
\address{Department of Mathematics, University of Kentucky, 719 Patterson Office Tower,
Lexington, KY 40506, USA}
\email{keller.v@uky.edu}
\date{}

\begin{document}

\begin{abstract}
We prove the generalized total rank conjecture over regular rings in characteristic $2$: if $R$ is
 a regular Noetherian
 domain of characteristic $2$ and $P$ is a differential $R$-module admitting a finite projective
 flag and having nonzero homology $H(P)$, then $\rank_R(P)\ge2^{\codim_RH(P)}$.  In particular, we
 prove Carlsson's conjecture for elementary abelian $2$-groups in every rank.  We also
 obtain sharp homology bounds for arbitrary continuous actions of such groups and for perfect complexes
 over finite group algebras; the sphere rank conjecture follows.  The proof identifies the homology
 of a Tate construction on the tensor square $P\otimes_RP$ with the Frobenius pullback of $H(P)$,
 and compares lengths by deforming the Tate
 differential.
\end{abstract}

\maketitle

\section{Introduction}\label{sec:intro}

Conjectures about total ranks of free complexes and about the homology of spaces with free group
actions lie at the intersection of commutative algebra and algebraic topology.  On the topological
side, Carlsson~\cite{CarlssonHomology} conjectured~\cite[Conjecture~I.3]{Carlsson} that a
nonempty finite free $(\Z/2)^d$-CW complex $X$
satisfies $\sum_i\dim_{\Ff_2}H_i(X;\Ff_2)\ge 2^d$, and proved it for
$d\le3$~\cite[Theorem~2]{CarlssonStudies}.  On the algebraic side, the Buchsbaum--Eisenbud--Horrocks
conjecture~\cite{BuchsbaumEisenbud,Hartshorne} predicts that a nonzero finite-length module of finite
projective dimension over a Noetherian local ring of dimension $d$ has $i$th Betti number at least
$\binom{d}{i}$.  Summing these inequalities gives the total rank conjecture, proposed by Avramov and
formulated and studied by Avramov--Buchweitz~\cite{AvramovBuchweitz}: the total Betti number is at
least $2^d$.  The corresponding
form for complexes asserts that every bounded complex $P$ of finite free modules with nonzero
finite-length homology over a local ring $R$ of dimension $d$ satisfies
$\rank_R(P):=\sum_i\rank_R P_i\ge 2^d$.

This extension to complexes, known as the \emph{generalized total rank conjecture}, is false in odd
characteristic: Iyengar and Walker~\cite{IyengarWalker} produced counterexamples, even over regular
local rings, and refuted the algebraic analogue of Carlsson's conjecture for $(\Z/p)^d$ with $p$
odd; these complexes are not topologically realizable~\cite{RupingStephan}.  In contrast,
Walker~\cite{Walker} proved the total rank conjecture for modules in odd characteristic.  Neither
the counterexamples nor Walker's theorem addressed characteristic $2$.  Evidence that characteristic
$2$ is exceptional came from~\cite{VW}, which proved a statement about sufficiently short complexes
known to be false in every odd characteristic.  Our main theorem proves the generalized conjecture
over regular rings of characteristic $2$, and Carlsson's conjecture in full.

\begin{thmintro}\label{cor:TRC}
Let $R$ be a regular Noetherian domain of characteristic $2$.  If $P$ is a bounded complex of
finitely generated free $R$-modules or, more generally, a differential $R$-module admitting a
finite projective flag, and $H(P)\ne0$, then $\rank_R(P)\ge 2^{\codim_R H(P)}$.
\end{thmintro}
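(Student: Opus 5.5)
We follow the strategy signalled in the abstract: build from $P$ a new differential module whose rank is $\rank_R(P)^2$ and whose homology is the Frobenius pullback of $H(P)$, then compare lengths after localizing.

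\textbf{Step 1: reductions.} Since rank, codimension and finite projective flags are compatible with localization, we localize at a minimal prime $\mathfrak p$ of $\Supp H(P)$ with $\height\mathfrak p=\codim_R H(P)=:c$. This lets us assume $(R,\mathfrak m,k)$ is regular local of dimension $c$ and $H(P)\neq0$ has finite length. A bounded complex of free modules, folded into a single differential module, admits a finite projective flag with the same rank. We may therefore work throughout with differential modules admitting finite projective flags. After passing to a faithfully flat extension $R\to R[x]_{\mathfrak m R[x]}$ if needed, we may also assume $k$ is infinite and, by completing, that $R$ is complete. Write $r=\rank_R(P)$.

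\textbf{Step 2: the Tate construction on $P\otimes_R P$.} Let $\sigma$ be the swap involution on $P\otimes_R P$; in characteristic $2$ no signs intervene. We form the Tate-type differential module
\[
\Tate(P)=\bigl(P\otimes_RP,\ d\otimes1+1\otimes d+(1+\sigma)\bigr),
\]
or its deformed variant $d_t=d\otimes1+1\otimes d+t(1+\sigma)$ over $R[t]$. It is essential to confirm that this squares to zero. We have $(1+\sigma)^2=2(1+\sigma)=0$, and $\sigma$ commutes with $d\otimes1+1\otimes d$. Its rank is $r^2$, and a flag on $P$ induces one on $\Tate(P)$. The key identification, cited from the body of the paper (the ``Frobenius'' proposition), is
\[
H(\Tate(P))\cong \Frob H(P)=F_*\text{-pullback }R^{1/2}\otimes\cdots.
\]
More precisely, the Tate homology of the $\Z/2$-action on $H(P)\otimes H(P)$ is the Frobenius twist of $H(P)$: the norm cokernel modulo kernel is generated by the diagonal $x\otimes x$, and $x\mapsto x\otimes x$ is additive and Frobenius-semilinear in characteristic $2$. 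We assume this identification holds at the level of $P$ for flagged $P$, with the spectral sequence degenerating thanks to the flag.

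\textbf{Step 3: length comparison via deformation.} Over regular $R$, Kunz gives $\length_R(\Frob M)=2^{c}\length_R(M)$ for finite-length $M$. Homology of a differential module with a finite flag is semicontinuous under deformation. Specializing $t$ from a generic value to $t=0$ yields
\[
2^c\length H(P)=\length H(\Tate(P))\le\length H(P\otimes_RP)\ \text{ or a bound in terms of } r\cdot\length H(P).
\]
The aim is the inequality $\length H(\Tate(P))\le \tfrac{r}{1}\cdot\length H(P)$. This should follow by filtering $\Tate(P)$ as $P\otimes P$ with the first factor's differential switched off: the homology of $P\otimes_R(\text{free rank }r)$ has length $r\,\length H(P)$, and deforming the differential only decreases homology length. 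Combining the two gives $2^c\length H(P)\le r\,\length H(P)$, so $r\ge 2^c$.

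\textbf{Main obstacle.} The hardest part is the upper bound in Step 3. Making the deformation precise requires a family over $R[t]$ whose generic fibre is $\Tate(P)$ and whose special fibre has homology of length at most $r\,\length H(P)$, together with upper semicontinuity of homology length for flagged differential modules; the latter needs the flag to control Tor. My first attempt would be to use the deformation $d\otimes 1+t(1\otimes d)+(1+\sigma)$, checking that it squares to zero. Because $\sigma$ does not commute with this asymmetric differential, I expect to replace it by an $R[t]$-linear conjugate. The alternative is to bound $\length H(\Tate(P))$ through the spectral sequence of the flag filtration.
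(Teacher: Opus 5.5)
Your architecture matches the paper's: localize at a minimal prime of $\Supp H(P)$, use $H(\Tate(P))\cong\Rone\otimes_RH(P)$, scale lengths by $2^c$ via Kunz, and compare with $r\,\htot(P)$. However, Step~3, which carries the whole inequality, has a genuine gap.

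Your plan to bound $\length H(\Tate(P))$ by $r\,\htot(P)$ in one step fails, whether you filter along the flag of the first factor or use $d\otimes1+t(1\otimes d)+(1+\sigma)$. First, $1+\sigma$ does not preserve $P^i\otimes P$. Second, a direct computation using $(d\otimes1)\sigma=\sigma(1\otimes d)$ gives $\bigl(d\otimes1+t(1\otimes d)+(1+\sigma)\bigr)^2=(1+t)\,\sigma\,\partial$. So this squares to zero only at $t=1$ and is not a family of differential modules.

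Your display in Step~3 already contains the right intermediate term $\htot(P\otimes P)$. The missing idea is to keep it and split the comparison in two:
\begin{itemize}
\item For the \emph{untwisted} differential $\partial=d\otimes1+1\otimes d$, the first-factor flag does work. $\partial$ preserves $P^i\otimes P$, the subquotients are $P^{\oplus r_i}$, and the exact homology sequences give $\htot(P\otimes P)\le r\,\htot(P)$ (Lemma~\ref{lem:tensorbound}).
\item Separately, the symmetric deformation $\partial+t(1+\sigma)$ from your Step~2 is used only to show that the Tate homology has length at most $\htot(P\otimes P)$. It is a differential for every $t$.
\end{itemize}

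That second comparison needs an actual argument, not an appeal to ``semicontinuity for flagged modules.'' The paper sets $B=R[t]_{(\mathfrak m,t)}$ and $M=H\bigl((P\otimes P)\otimes_RB,\ \partial+t(1+\sigma)\bigr)$.
\begin{itemize}
\item Because the family is finite free, $t$ is a nonzerodivisor on it. The exact homology sequence then embeds $M/tM$ into $H(P\otimes P,\partial)$, so $\dim M\le1$.
\item The associativity formula for $e(t;M)$, together with $e(t;M)=\length(M/tM)-\length(0:_Mt)$, gives $\length_{B_{\mathfrak p}}M_{\mathfrak p}\le\htot(P\otimes P)$ at $\mathfrak p=\mathfrak mB$ (Proposition~\ref{prop:size}). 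No flag is used here.
\item Your ``generic value'' is this localization, and what lives there is not $\Tate(P)$ over $R$. Since $t$ is a unit in $B_{\mathfrak p}$, the differential equals $t\bigl(\partial_Q+(1+\sigma)\bigr)$ with $Q=(P_{\mathfrak p},t^{-1}d)$.
\item So you must apply the Frobenius identification and Kunz's length formula over the regular local ring $B_{\mathfrak p}$, of dimension $c$, to get $\length M_{\mathfrak p}=2^c\htot(P)$.
\end{itemize}

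Two smaller corrections. The spectral sequence behind $H(\Tate(P))\cong\Rone\otimes_RH(P)$ collapses because flatness of Frobenius kills $\Tor^R_q(\Rone,H(P))$ for $q>0$. It does not collapse because of the flag; the flag only gives exactness and homotopy invariance of $\Tate$. Also, $\sum_{i+j=k}P^i\otimes P^j$ is not dropped by $1+\sigma$, so it is not a flag on $\Tate(P)$, though none is needed.
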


For a group $G$ of finite elementary abelian $2$-rank, write
$r_2(G)=\max\{e:(\Z/2)^e\le G\}$.  We~deduce Carlsson's conjecture and the following sharp bound
for arbitrary continuous actions.

\begin{corintro}\label{cor:orbits}
Let $E=(\Z/2)^d$ act continuously on a nonempty finite CW complex $X$.  If the action is free, then
\[
 \sum_i\dim_{\Ff_2}H_i(X;\Ff_2)\ge2^d.
\]
Thus Carlsson's conjecture holds in every rank.  More generally, without assuming
freeness, set $s=\max_{x\in X}\rank E_x$, where $E_x$ is the stabilizer of $x$.  Then
\[
 \sum_i\dim_{\Ff_2}H_i(X;\Ff_2)\ge2^{d-s}=\min_{x\in X}|E\cdot x|.
\]
Finally, if a topological group $G$ of finite elementary abelian $2$-rank acts continuously on $X$, then
\[
 \sum_i\dim_{\Ff_2}H_i(X;\Ff_2)\ge2^{r_2(G)-\max_{x\in X}r_2(G_x)}.
\]
\end{corintro}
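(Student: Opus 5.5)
We deduce Corollary~\ref{cor:orbits} from Theorem~\ref{cor:TRC} by the standard Borel-construction dictionary, applied with $R=\Ff_2[x_1,\dots,x_d]=H^*(BE;\Ff_2)$. This ring is a regular Noetherian domain of characteristic $2$.

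\textbf{Free case.} Take the cellular cochain complex $C^*(X;\Ff_2)$ as a bounded complex of finitely generated free $\Ff_2[E]$-modules. Apply the Koszul/BGG-type functor $\Ff_2[E]\text{-complexes}\to$ differential $R$-modules: for $N$ over $\Ff_2[E]$ with $E$ generated by $g_1,\dots,g_d$, form $N\otimes_{\Ff_2}R$ with differential $\partial_N\otimes1+\sum_i(g_i-1)\otimes x_i$, after totalizing. This is the model of Carlsson and of Avramov--Buchweitz--Iyengar. Since $(g_i-1)^2=0$ in characteristic $2$, this is a genuine differential module. Its underlying module is free of rank $\sum_i\dim_{\Ff_2}C^i(X)/|E|\cdot$, or, after first passing to a minimal model of $C^*(X)$ over $\Ff_2[E]$, of rank $\sum_i\dim H_i(X;\Ff_2)$. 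More precisely, I would use the minimal-model step: replace $C^*(X;\Ff_2)$ by the $\Ff_2$-homology with an $A_\infty$/perturbed structure so that the resulting differential $R$-module $P$ has $\rank_R P=\sum_i\dim_{\Ff_2}H_i(X;\Ff_2)$. I would treat this as available from the paper's later sections; it amounts to homological perturbation and is routine. The grading filtration gives $P$ a finite projective flag.

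Its homology computes the equivariant cohomology $H^*_E(X;\Ff_2)$, up to regrading. By Quillen's localization theorem, freeness forces $H^*_E(X)$ to be finite-dimensional, so $H(P)$ has finite length and $\codim_R H(P)=d$. Nonemptiness gives $H(P)\neq0$, since $H^0_E(X)\neq0$. Theorem~\ref{cor:TRC} then yields the bound $2^d$.

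\textbf{Non-free actions.} Quillen's stratification theorem describes the support of $H^*_E(X)$ as the union of the images of $H^*(BE_x)$, that is, of the linear subspaces $V_{E_x}\subset \mathbb{A}^d$. Its dimension is therefore at most $s=\max_x\rank E_x$, so $\codim H(P)\ge d-s$. Running the same construction and invoking Theorem~\ref{cor:TRC} gives the bound $2^{d-s}$. For this we need $X$ to be an $E$-CW complex, reduced to via a continuous action only after replacing $X$ by an equivariant CW approximation. The hard point here is continuity: a continuous action on a finite CW complex need not be cellular. I would instead use Čech or sheaf-theoretic equivariant cohomology, for which Quillen's theorems hold for compact spaces with finite cohomological dimension. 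I would realize $P$ from a finite-dimensional model of $H^*(X)$ via Borel spectral sequence perturbation.

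\textbf{General $G$ and the main obstacle.} Choose $E\le G$ with $\rank E=r_2(G)$ and restrict the action. Each $E_x=E\cap G_x$ is an elementary abelian $2$-subgroup of $G_x$, so $\rank E_x\le r_2(G_x)$. The previous bound then gives the claim. The identity $2^{d-s}=\min|E\cdot x|$ is immediate from $|E\cdot x|=2^{d-\rank E_x}$.

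I expect the main obstacle to be producing a differential $R$-module of rank exactly $\sum\dim H_i(X)$ whose homology has the correct support for merely continuous actions. I would first try the minimal $A_\infty$-model over $\Ff_2[E]$ of cochains in the sheaf-theoretic setting.
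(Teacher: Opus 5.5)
Your outline is the paper's route: build a finite free dg module over $S=H^*(BE;\Ff_2)$ of rank $\sum_i\dim_{\Ff_2}H_i(X;\Ff_2)$ with homology $H_E^*(X;\Ff_2)$, apply Theorem~\ref{cor:TRC}, get $\codim_SH_E^*(X;\Ff_2)=d-s$ from Quillen, and restrict to a rank-$r_2(G)$ subgroup $E\le G$ using $E_x=E\cap G_x$. Those last steps are fine. However, the assertion that ``the grading filtration gives $P$ a finite projective flag'' fails whenever $E$ acts nontrivially on $H^*(X;\Ff_2)$. This already happens for $\Z/2$ swapping two points, which is a sharp case. There the minimal model is $\Ff_2^2\otimes\Ff_2[t]$ with $D=t\bigl(\begin{smallmatrix}1&1\\1&1\end{smallmatrix}\bigr)$, everything sits in one degree, and $D$ does not drop the degree filtration. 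In general the minimal model's differential has a component linear in the $x_i$ between basis vectors of the \emph{same} degree. Your unminimized BGG model has the same defect, since $\sum_i(g_i-1)\otimes x_i$ preserves cochain degree. Producing a flag is exactly the content of Lemma~\ref{lem:flag}. On the top-degree block one has $d=\sum_ix_iL_i$ with $L_i^2=0$ and, because $2=0$, $L_iL_j=L_jL_i$, so the $L_i$ have a common kernel vector $v$. Then $S\cdot v$ is a summand with zero differential, and one inducts on the rank. For the cellular model you could refine the cochain-degree filtration by radical layers of $\Ff_2E$. But that model has rank equal to the number of cells, and a flag does not obviously descend to the minimal direct summand whose rank you actually need.

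The second gap is the model itself. You leave its construction for a merely continuous action as something you would try, and it is not available later in the paper. The paper imports it from Allday--Puppe. Their minimal Hirsch--Brown model $(H^*(X;\Ff_2)\otimes_{\Ff_2}S,D)$ is built from singular cochains, needs neither a cellular action nor trivial action on cohomology, and has homology $H_E^*(X;\Ff_2)$ as an $S$-module. Your ``perturbation of the Borel spectral sequence'' is aiming at precisely this object, so citing it closes the gap, and no sheaf-theoretic detour is needed. In the cellular case you could avoid $A_\infty$ machinery with homogeneous Gaussian elimination on $C^*(X;\Ff_2)\otimes_{\Ff_2}S$. Its minimal summand $Y$ has $Y\otimes_S\Ff_2$ with zero differential and homology $H^*(X;\Ff_2)$, hence rank $\sum_i\dim_{\Ff_2}H^i(X;\Ff_2)$. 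Even so, you would still need Lemma~\ref{lem:flag}, and this does not cover non-cellular actions.
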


\begin{corintro}\label{cor:Carlsson}
Let $k$ be a field of characteristic $2$ and let $G$ be a finite group.  Every bounded complex $C$ of
finitely generated projective $kG$-modules with nonzero homology satisfies
\[
 \sum_i\dim_k H_i(C)\ge2^{r_2(G)}.
\]
This bound is sharp.  Consequently, every nonempty finite CW complex $X$ with a
free continuous $G$-action satisfies
\[
 \sum_i\dim_{\Ff_2}H_i(X;\Ff_2)\ge2^{r_2(G)}.
\]
\end{corintro}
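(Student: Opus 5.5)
Corollary~\ref{cor:Carlsson} will follow from Theorem~\ref{cor:TRC} via the standard Koszul/BGG-type translation between $kE$-complexes and differential modules over a polynomial ring. Everything reduces to the case $G=E=(\Z/2)^r$ with $r=r_2(G)$.

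\textbf{Reduction to $E$.} Choose $E\le G$ elementary abelian of rank $r=r_2(G)$. Since $kG$ is free over $kE$, restriction sends $C$ to a bounded complex of finitely generated projective $kE$-modules. Its homology has the same $k$-dimension, and it is still nonzero. Because $kE$ is local, projective modules over it are free.

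\textbf{Translation to a differential module.} Write $kE=k[x_1,\dots,x_r]/(x_1^2,\dots,x_r^2)$ with $x_i=g_i-1$. Over $S=k[t_1,\dots,t_r]$, which is regular of characteristic $2$, form the differential module
\[
 P=\bigoplus_i C_i\otimes_k S,\qquad \partial=d_C\otimes 1+\sum_j x_j\otimes t_j .
\]
We have $\partial^2=0$ because the summands anticommute in characteristic $2$ and $x_j^2=0$. The filtration by homological degree of $C$ is a finite projective flag, and $\rank_S(P)=\dim_k C$. This is the Carlsson / Avramov--Buchweitz--Iyengar--Miller construction.

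However, $\dim_k C$ is too large: we need $\dim_k H(C)$. To fix this, first replace $C$ by a minimal complex, so that $d_C\otimes_{kE}k=0$. Then define $P$ from $\bar C=C\otimes_{kE}k$ with the induced twisted differential, built from a homotopy-transfer / perturbation of the $kE$-structure onto $H(C)$. Concretely, I would use the BGG correspondence: the derived category of $kE$ (generated in the appropriate sense) is equivalent to dg-modules over the exterior or polynomial side. Under it, $H(C)$ with its $A_\infty$ $kE$-structure yields a differential $S$-module $P$ that is free of rank $\dim_k H(C)$, carries a finite flag, and satisfies $H(P)\ne0$.

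\textbf{Codimension.} Show that $H(P)$ is supported only at the irrelevant ideal $(t_1,\dots,t_r)$, so that $\codim H(P)=r$. This comes from finiteness of $C$. Because $C$ is perfect, after inverting any $t$-direction, i.e.\ restricting to a cyclic shifted subgroup $\langle 1+\sum a_j x_j\rangle$, the complex $C$ becomes acyclic in the Tate sense. This is Carlsson's rank-variety argument. Then Theorem~\ref{cor:TRC} gives $\dim_k H(C)=\rank_S P\ge 2^r$.

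\textbf{Main obstacle.} The hard step is constructing $P$ with rank $\dim H(C)$ rather than $\dim C$ while keeping $H(P)\ne0$. I would first try to avoid the issue by using Carlsson's original reduction to the case where $C$ has homology concentrated in a single degree. If that fails, I would fall back on Koszul duality with $H(C)$ replaced by a minimal $A_\infty$ model.

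\textbf{Sharpness and topology.} For sharpness, take $C=kG\otimes_{kE}K$, where $K=\bigotimes_j(kE_j\xrightarrow{x_j}kE_j)$ is the tensor product of two-term complexes. Then $H(C)$ has dimension $2^r\cdot[G:E]$. To get exactly $2^r$ one should instead use the Koszul-type complex over $E$ itself, or the standard example from the free action of $E$ on a product of spheres.

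The topological statement follows by applying the algebraic bound to the cellular chain complex $C_*(X;\Ff_2)$. Since the action is free, this is a bounded complex of finitely generated free $\Ff_2G$-modules, and $H(X)\ne0$ because $X$ is nonempty.
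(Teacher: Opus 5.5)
\textbf{Lower bound.} Your lower bound follows the paper's route. You restrict to $E\le G$ of rank $r_2(G)$, pass by Koszul duality to a finite free dg $S$-module of rank $\dim_kH(C)$ with nonzero finite-dimensional homology, and apply Theorem~\ref{cor:TRC} via Lemma~\ref{lem:flag}. The paper does not build the dual module by hand. It cites Carlsson's Theorem~II.7 and Proposition~II.9, with the extension to general $k$, which, after splitting off contractible summands, match $\dim_kH(C)$ with the $S$-rank of the dual module. Your $A_\infty$ twisted tensor product on $H(C)$ is the right object. The intermediate suggestion of building $P$ from $C\otimes_{kE}k$ is not: its rank is $\rank_{kE}C$, which under the duality corresponds to $\dim_kH(P)$ rather than to $\rank_SP$.

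\textbf{Sharpness: the genuine gap.} The statement claims that $2^{r_2(G)}$ is attained for \emph{every} finite group $G$. Your candidate $kG\otimes_{kE}K$ has total homology of dimension $2^r[G:E]$, which exceeds $2^r$ whenever $E\ne G$. For example, for $G=\Z/4$ it gives $4$, whereas $kG\xrightarrow{g-1}kG$ gives $2$. Your fallbacks, the Koszul complex over $E$ and the free $E$-action on a product of spheres, are complexes of $kE$-modules. These are not projective over $kG$ when $E\ne G$, so they do not settle the question. You need an input that is genuinely about $G$:
\begin{itemize}
\item If $r=0$, then $|G|$ is odd and $k$ in degree $0$ is projective by Maschke.
\item If $r>0$, Evens's finite generation and Quillen's dimension theorem give $\dim_{\mathrm{Krull}}H^*(G;k)=r$. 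Applying the Benson--Carlson construction to a homogeneous system of parameters $\zeta_1,\dots,\zeta_r$ produces a bounded complex of projective $kG$-modules whose homology is that of a product of $r$ spheres, of total dimension $2^r$. This construction is valid in characteristic $2$.
\end{itemize}
Without this, you have sharpness only for elementary abelian $G$.

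\textbf{Topological consequence.} There is a smaller gap here. The statement allows an arbitrary free \emph{continuous} $G$-action, and such an action need not act on cellular chains, so $C_*(X;\Ff_2)$ is not available as a complex of $\Ff_2G$-modules. The paper avoids this by restricting to $E$, which still acts freely, and applying Corollary~\ref{cor:orbits} with $s=0$. That corollary rests on the Allday--Puppe Hirsch--Brown model of the Borel construction, which needs no cellularity. Otherwise you would have to replace $X$ by a finite free $G$-CW complex of the same equivariant homotopy type, which takes an argument you do not give.
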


The sphere rank problem asks whether a free action of $E=(\Z/2)^d$ on a product of $m$ spheres
forces $d\le m$.  It has roots in the results of Smith, Conner, and Heller for one and two
factors~\cite{Smith,Conner,Heller} and was the original topological motivation for Carlsson's
conjecture~\cite{CarlssonSpheres,CarlssonSphereRank}; see
also~\cite{AdemBrowder,Hanke,OkutanYalcin}.
\begin{corintro}\label{cor:sphere-rank}
Let $E=(\Z/2)^d$ act freely and continuously on a finite CW complex
$X\simeq\prod_{j=1}^m S^{n_j}$, where $n_j\ge1$.  Then $d\le m$.
\end{corintro}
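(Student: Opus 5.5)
Suppose $E=(\Z/2)^d$ acts freely on $X\simeq\prod_{j=1}^m S^{n_j}$. The plan is to deduce $d\le m$ directly from the first statement of Corollary~\ref{cor:orbits} (equivalently Corollary~\ref{cor:Carlsson} with $G=E$, $r_2(E)=d$), so that the whole argument is a homology count.

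\emph{Step 1: compute the total mod $2$ homology of $X$.} By the Künneth theorem over the field $\Ff_2$,
\[
 H_*(X;\Ff_2)\cong\bigotimes_{j=1}^m H_*(S^{n_j};\Ff_2).
\]
Since each $n_j\ge1$, every factor $H_*(S^{n_j};\Ff_2)$ is $\Ff_2$ in degrees $0$ and $n_j$ and vanishes otherwise, so it has total dimension $2$. The total dimension is therefore
\[
 \sum_i\dim_{\Ff_2}H_i(X;\Ff_2)=2^m.
\]
This uses only that $X$ is homotopy equivalent to the product, since homology is a homotopy invariant.

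\emph{Step 2: apply the homology bound.} The space $X$ is a nonempty finite CW complex with a free continuous $E$-action, so Corollary~\ref{cor:orbits} gives
\[
 2^m=\sum_i\dim_{\Ff_2}H_i(X;\Ff_2)\ge 2^d.
\]
Hence $d\le m$.

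There is no real obstacle, because all the difficulty sits in Corollary~\ref{cor:orbits}. The only point to check is that its hypotheses are met. That corollary is stated for arbitrary continuous actions on finite CW complexes, so no cellularity of the action is needed. The assumption $n_j\ge1$ is exactly what makes each sphere factor contribute $2$ rather than the total $4$ of $S^0$ (two points, each with $H_0\cong\Ff_2$, plus nothing else gives $2$ in $H_0$ alone, but the product structure changes). With $n_j\ge1$ the count $2^m$ is clean.
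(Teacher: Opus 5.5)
Your argument is correct and is exactly the paper's: the K\"unneth theorem gives total mod-$2$ homology $2^m$, and the free case of Corollary~\ref{cor:orbits} then gives $2^d\le 2^m$. Your closing remark about $S^0$ is mistaken, since $H_*(S^0;\Ff_2)\cong\Ff_2^2$ also has total dimension $2$, so the count $2^m$ does not depend on $n_j\ge1$; this does not affect the proof.
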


Indeed, the K\"unneth theorem gives
\[
 \sum_i\dim_{\Ff_2}H_i(X;\Ff_2)=2^m,
\]
so Corollary~\ref{cor:orbits} yields $2^d\le2^m$, and hence $d\le m$.  For an arbitrary continuous $E$-action
on such an $X$, Corollary~\ref{cor:orbits} still gives a point whose stabilizer has rank at least
$d-m$.  For comparison, if a finite CW complex
$X\simeq\prod_{j=1}^m\mathbb{R}P^{n_j}$ admits a free $E$-action, Corollary~\ref{cor:orbits} gives
$2^d\le\prod_{j=1}^m(n_j+1)$; Yal\c{c}{\i}n recently obtained sharper bounds in terms of the dimensions for
free cellular actions on finite-dimensional CW complexes of this homotopy type, under the additional
assumption that the action on mod-$2$ cohomology is trivial~\cite[Theorem~2]{YalcinRP}.

The formulation of the generalized total rank conjecture for differential modules is due to
Avramov, Buchweitz, and Iyengar~\cite[Conjecture~5.3]{ABI}; Theorem~\ref{cor:TRC} settles the regular local case in
characteristic $2$.  The specialization to complexes follows by replacing $P_\bullet$ with the
differential module $\bigoplus_iP_i$, equipped with the differential induced by that of $P_\bullet$;
it has a finite projective flag, rank $\sum_i\rank_RP_i$, and homology $\bigoplus_iH_i(P_\bullet)$.  Over a local ring the
projective modules $P_i$ are free, so
Theorem~\ref{cor:TRC} contains the generalized total rank conjecture for complexes and, via Koszul
duality~\cite[Theorem~II.7 and Proposition~II.9]{Carlsson}, yields Carlsson's conjecture, the free case of
Corollary~\ref{cor:orbits}.

In joint work in preparation with Mark E. Walker, we give a direct topological proof of
Carlsson's conjecture in the finite simplicial setting.  More generally, if $p$ is prime and $X$
is a nonempty finite simplicial set equipped with a simplicial action of $E=(\Z/p)^r$ that is free
in every degree, then
\[
 \left(\sum_i\dim_{\Ff_p}H_i(X;\Ff_p)\right)^{p-1}\ge p^r.
\]

\begin{samepage}
For orientation, Table~\ref{tab:rank-status} places Theorem~\ref{cor:TRC} and
Corollaries~\ref{cor:orbits}, \ref{cor:Carlsson}, and~\ref{cor:sphere-rank} alongside the
corresponding earlier results.  Together with the counterexamples of Iyengar and Walker~\cite{IyengarWalker},
Theorem~\ref{cor:TRC} establishes the dichotomy between characteristic $2$ and the other
characteristics for the generalized total rank conjecture over equicharacteristic regular local
rings.  Theorem~\ref{cor:TRC} is sharp for Koszul complexes; the bounds in
Corollary~\ref{cor:orbits} are sharp for the translation actions on $E$ and $E/H$; and
Corollary~\ref{cor:Carlsson} is sharp by the Benson--Carlson complexes constructed in its proof.\par
\end{samepage}

\begin{table}[t]
\caption{Consequences of Theorem~\ref{cor:TRC} for rank problems.}
\label{tab:rank-status}
\centering
\footnotesize
\setlength{\tabcolsep}{4pt}
\renewcommand{\arraystretch}{1.14}
\begin{tabularx}{\textwidth}{
 @{}>{\raggedright\arraybackslash}p{.25\textwidth}
 >{\raggedright\arraybackslash}X
 >{\raggedright\arraybackslash}X@{}}
\toprule
Problem & Earlier results & Consequence of Theorem~\ref{cor:TRC} \\
\midrule
Generalized total rank
& Open in characteristic $2$ for arbitrary bounded complexes and differential modules admitting
finite projective flags;
counterexamples exist in residual characteristic different from $2$ for $d\ge8$~\cite{IyengarWalker}.
& The codimension bound of Theorem~\ref{cor:TRC} over regular rings of characteristic $2$. \\
\addlinespace[2pt]
Perfect complexes over group algebras
& Known for $k[(\Z/2)^d]$ when $d\le3$~\cite[Theorem~2]{CarlssonStudies} and open for $d\ge4$;
false at odd primes for $d\ge8$~\cite{IyengarWalker}.
& The minimum total homology dimension is $2^{r_2(G)}$ among perfect $kG$-complexes with nonzero
homology, for every finite group $G$ and every field $k$ of characteristic $2$
(Corollary~\ref{cor:Carlsson}). \\
\addlinespace[2pt]
Actions of finite groups
& Carlsson's conjecture was known for free $(\Z/2)^d$-actions with $d\le3$
\cite[Theorem~2]{CarlssonStudies}.
& Carlsson's conjecture in every rank (Corollary~\ref{cor:orbits}); more generally, for continuous
actions on a nonempty finite CW complex,
$\sum_i\dim_{\Ff_2}H_i(X;\Ff_2)\ge2^{r_2(G)}$ when a finite group $G$ acts freely, and
$\sum_i\dim_{\Ff_2}H_i(X;\Ff_2)\ge\min_{x\in X}|E\cdot x|$ for every $E$-action. \\
\addlinespace[2pt]
Sphere rank
& No proof was known without restrictions on the dimensions or cohomology action; see
\cite{CarlssonSpheres,CarlssonSphereRank,AdemBrowder,Hanke,OkutanYalcin}.
& A free continuous $(\Z/2)^d$-action on a finite CW complex
$X\simeq\prod_{j=1}^mS^{n_j}$ forces $d\le m$, without restrictions on the dimensions or
cohomology action. \\
\bottomrule
\end{tabularx}
\end{table}

These results rest on the following computation.  Let $(R,\mathfrak m)$ be a commutative Noetherian
local ring of characteristic $2$, and let $P$ be a differential $R$-module.  Let $\Rone$ denote
$R$ as a bimodule with regular left action and
right action twisted by Frobenius, $x\cdot r=xr^2$, and let $\Frob P=\Rone\otimes_R P$ be the
\emph{Frobenius pullback}.  Put $\partial=d_P\otimes1+1\otimes d_P$, and let $\tau$ be the
transposition $x\otimes y\mapsto y\otimes x$ on $P\otimes_R P$.  The \emph{Tate construction} on $P$
is the differential module $\Tate_R(P)=\bigl(P\otimes_RP,\ \partial+(1+\tau)\bigr)$
(Section~\ref{sec:deduce}).

\begin{thmintro}\label{thm:main}
Let $R$ be a regular local ring of characteristic $2$, and let $P$ be a differential $R$-module
admitting a finite free flag.  Then $H(\Tate_R(P))\cong\Rone\otimes_RH(P)$; equivalently
$H(\Tate_R(P))\cong H(\Frob P)$, the Frobenius of $R$ being flat.
\end{thmintro}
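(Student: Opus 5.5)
Since $R$ is regular, Kunz's theorem makes Frobenius flat. Flatness gives $\Rone\otimes_R H(P)\cong H(\Frob P)$, because homology of a differential module commutes with a flat base change. So the content of the statement is to produce an isomorphism $H(\Tate_R(P))\cong \Rone\otimes_R H(P)$.

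\textbf{Step 1: the key map.} The plan is to construct a natural $R$-linear map
\[
\phi\colon \Frob P=\Rone\otimes_R P\longrightarrow P\otimes_R P,\qquad r\otimes x\mapsto r\,(x\otimes x).
\]
Additivity follows from characteristic $2$. Indeed, $(x+y)\otimes(x+y)=x\otimes x+y\otimes y+(1+\tau)(x\otimes y)$, so $\phi$ is additive modulo $\im(1+\tau)$. We also have $(sx)\otimes(sx)=s^2(x\otimes x)$, which matches the Frobenius twist. Next check that $\phi$ is a map of differential modules up to the correction term. We get $(\partial+1+\tau)(x\otimes x)=dx\otimes x+x\otimes dx=(1+\tau)(dx\otimes x)$. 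This is a boundary-type correction, so rather than a strict chain map, $\phi$ induces a well-defined map on homology. One sees this either by working in $H$ of $T_R(P)$ directly, or by replacing $\phi$ with $x\mapsto x\otimes x+ dx\otimes x$-type corrections, checked on a basis. Concretely, I would fix a finite free flag and a basis adapted to it, define $\phi$ on basis elements $e_i\mapsto e_i\otimes e_i$ plus a correction, and verify that it commutes with the differentials. This yields $\bar\phi\colon H(\Frob P)\to H(\Tate_R(P))$.

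\textbf{Step 2: reduction to a simpler case.} A flag filters $P$ by differential submodules with free subquotients carrying zero differential. Both $P\mapsto H(\Frob P)$ and $P\mapsto H(\Tate_R(P))$ behave well on such filtrations. For $\Tate_R$, the filtration on $P$ induces one on $P\otimes P$ whose graded pieces are $\Tate_R(F_i)$ on the diagonal and $F_i\otimes F_j\oplus F_j\otimes F_i$ off the diagonal. On the off-diagonal pieces $1+\tau$ swaps the summands, so they are acyclic for $1+\tau$. For a free module $F$ with zero differential, $H(F\otimes F,1+\tau)$ is the Tate cohomology of $\Z/2$ acting by swap. This is spanned by the diagonal classes $e_i\otimes e_i$ and is free, isomorphic to $\Rone\otimes F$ via $\phi$. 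So on associated graded objects, $\bar\phi$ is an isomorphism.

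\textbf{Step 3: passing from graded pieces to $P$.} This is the main obstacle. Filtration spectral sequences for differential modules lack a grading, so the naive comparison does not converge automatically, and lengths may drop. Following the abstract, I would deform the Tate differential. Consider $P\otimes P\otimes R[t]$ with differential $t\partial+(1+\tau)$, or equivalently $\partial+t(1+\tau)$, and compare the fibres at $t=1$ and $t=0$. Using local regularity, I would first reduce by localizing and completing to $H(P)$ of finite length over a complete regular local ring; the isomorphism can then be checked after faithfully flat base change and at each prime. There, upper semicontinuity gives $\length H(\Tate_R(P))\le\length H(\text{special fibre})$, and the Step 2 computation identifies the latter with $\length(\Rone\otimes H(P))$. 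Combining this with injectivity of $\bar\phi$ then forces an isomorphism. Injectivity can be proved by composing with a retraction-type map $P\otimes P\to \Rone\otimes P$ on homology, namely a "Frobenius trace" $x\otimes y\mapsto$ coefficient pairing with respect to the adapted basis. I expect the length comparison under deformation to be the hardest part, with naturality of $\bar\phi$ under the flag a close second.
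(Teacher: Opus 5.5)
\textbf{The decisive gap is Step~3.} Neither special fibre of your deformation has homology of length $\length_R(\Rone\otimes_RH(P))$. Step~2 computes the homology of associated graded pieces of a filtration, not of a fibre, so it gives no information here.
\begin{itemize}
\item For $t\partial+(1+\tau)$ at $t=0$, the homology is $H(P\otimes P,1+\tau)\cong\Rone\otimes_RP$. This is free of rank $\rank_R(P)$, and not even of finite length when $\dim R>0$.
\item For $\partial+t(1+\tau)$ at $t=0$, the homology is $H(P\otimes P)$, which is usually too big. Over $R=\Ff_2[[x]]$ with $P=K\oplus K$ and $K=(R\xrightarrow{x}R)$, one has $\htot(P\otimes P)=8$ but $\length_R(\Rone\otimes_RH(P))=4$.
\end{itemize}
So semicontinuity yields only $\htot(\Tate_R(P))\le\htot(P\otimes P)$. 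That is exactly Proposition~\ref{prop:size}, which the paper uses \emph{after} Theorem~\ref{thm:main} to get the rank bound in Theorem~\ref{thm:rank}, not to prove Theorem~\ref{thm:main}. Your reduction to finite-length homology also fails, even with a global comparison map in hand. At a non-minimal prime of $\Supp H(P)$ the localized homology has infinite length, so no length squeeze can detect bijectivity there.

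\textbf{What survives from Step~1, and what is missing.} Step~1 does give a map on homology. Write $D=\partial+(1+\tau)$. For cycles $x,y$ and any $z$:
\begin{itemize}
\item $D(x\otimes x)=0$;
\item $x\otimes y+y\otimes x=D(x\otimes y)$;
\item $x\otimes dz+dz\otimes x=D(x\otimes dz)$;
\item $dz\otimes dz=D(z\otimes z+dz\otimes z)$.
\end{itemize}
Hence $1\otimes[x]\mapsto[x\otimes x]$ is a well-defined $R$-linear map $\bar\phi\colon\Rone\otimes_RH(P)\to H(\Tate_R(P))$; in effect it is the edge map of the paper's spectral sequence.

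However, this map has no chain-level lift. Write $d_Pe_\mu=\sum_\lambda a_{\lambda\mu}e_\lambda$ and extend $e_\mu\mapsto e_\mu\otimes e_\mu+de_\mu\otimes e_\mu$ linearly. A cycle $\sum_\mu c_\mu(1\otimes e_\mu)$ of $\Frob P$ then goes to an element with $D$-image $\sum_{\lambda<\lambda'}\bigl(\sum_\mu c_\mu a_{\lambda\mu}a_{\lambda'\mu}\bigr)(1+\tau)(e_\lambda\otimes e_{\lambda'})$, which need not vanish. The natural reading of your Frobenius trace, $e_i\otimes e_j\mapsto\delta_{ij}(1\otimes e_i)$, is not a chain map either. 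It kills $D(e_i\otimes e_i)$, whereas $d_{\Frob P}(1\otimes e_i)=\sum_\lambda a_{\lambda i}^2(1\otimes e_\lambda)$. So injectivity is unproved. Moreover, without a filtered map, the isomorphic $E^1$ pages of Step~2 say nothing about the higher differentials of the flag filtration. Convergence is not the problem: a finite filtration always gives a convergent spectral sequence.

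The missing idea is to choose the filtration so that no comparison is needed:
\begin{itemize}
\item Replace $P$ by the quasi-isomorphic $X$ of Lemma~\ref{lem:filtration}. Its subquotients are the terms $F_q$ of a finite free resolution of $H(P)$, and $d_X$ induces $\delta_q$. The replacement is harmless by Lemmas~\ref{lem:contract} and~\ref{lem:htpy}.
\item Then $E^1_q\cong\Rone\otimes F_q$ by Lemma~\ref{lem:zerodiff}.
\item The identity $D(\tilde v\otimes\tilde v+d\tilde v\otimes\tilde v)=d\tilde v\otimes d\tilde v$ gives $d^1=\Rone\otimes\delta_q$, so $E^2_q=\Tor^R_q(\Rone,H(P))$.
\item These groups vanish for $q>0$ by Kunz's theorem, so the sequence collapses.
\end{itemize}
This needs no length count and no finite-length hypothesis.
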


The appearance of Frobenius is already visible when $P$ is free with zero differential.  Relative
to a basis of $P$, each submodule spanned by $b\otimes c$ and $c\otimes b$, with $b\ne c$, is
stable under $1+\tau$ and acyclic with that differential, while the classes of the diagonal tensors
$b\otimes b$ form the homology.
Modulo the image of $1+\tau$, the assignment $x\mapsto x\otimes x$ is additive, and
$(rx)\otimes(rx)=r^2(x\otimes x)$, so these diagonal classes carry the Frobenius twist.
Section~\ref{sec:hypertor} extends this calculation to the filtered differential module constructed in
Lemma~\ref{lem:filtration}.

For a differential module $C$ with finite-length homology write $\htot(C)=\length_RH(C)$.  Once
Theorem~\ref{thm:main} is known, the rank bound follows by counting lengths: over a regular local ring
of dimension $d$ the Frobenius pullback multiplies finite length by $2^d$, so
\[
 2^d\,\htot(P)\ =\ \htot(\Rone\otimes_RH(P))\ =\ \htot(\Tate_R(P))\ \le\ \htot(P\otimes_R P)\ \le\
 \rank_R(P)\,\htot(P).
\]
The equalities use the Frobenius length formula and Theorem~\ref{thm:main}; the first inequality is
proved by deforming the differential of the Tate construction, and the second by filtering along a
flag of one tensor factor.
Since $\htot(P)>0$, cancellation gives $\rank_R(P)\ge 2^d$.  The general statement of
Theorem~\ref{cor:TRC} follows by localizing at a prime of minimal height in $\Supp H(P)$, where the
homology becomes finite length and Theorem~\ref{thm:rank} applies; that height is
$\codim_R H(P)$.

The proof of Theorem~\ref{thm:main} uses an Adams spectral sequence.  We replace $P$ by a
quasi-isomorphic differential module with a finite filtration whose subquotients are the terms
$F_q$ of a finite free resolution $F_\bullet\to H(P)$ and whose differential induces the resolution
maps $F_q\to F_{q-1}$.  Exactness of $\Tate_R$ then gives a finite exact couple and hence a
spectral sequence.
Its first page is the Tate homology of the subquotients, which Lemma~\ref{lem:zerodiff} identifies
with their Frobenius pullbacks, so its second page is $\Tor^R_*(\Rone,H(P))$.  Regularity makes the
Frobenius flat by Kunz's theorem, and the sequence collapses.

The bound $\htot(\Tate_R(P))\le\htot(P\otimes_R P)$ is a separate length comparison.  Over
$R[s]_{(\mathfrak m,s)}$ the differential $\partial+s(1+\tau)$ interpolates between $\partial$ and
the Tate differential: the special fiber of the deformed homology embeds into $H(P\otimes_RP)$, the
generic fiber is the Tate construction of the differential module $(P,s^{-1}d_P)$, and a
Hilbert--Samuel multiplicity argument gives the stated inequality.

The argument takes place entirely in the category of differential modules admitting finite free
flags: no grading is assumed, bounded complexes give such modules by taking the direct sum of their
terms, and over
$S=k[x_1,\dots,x_d]$ with $|x_i|=-1$, the polynomial ring appearing in Carlsson's algebraic
formulation, every finite free dg module admits a free flag (Lemma~\ref{lem:flag}).

The Frobenius twist in the $C_2$-Tate construction of a tensor square ($C_2$ cyclic of order two) is
a feature of the Tate diagonal of Nikolaus and Scholze~\cite{NS} and Carmeli's Frobenius twist
functor~\cite[Definition~2.9]{Carmeli}.  Frobenius twists also occur in Kaledin's work on cyclic
homology in positive characteristic~\cite{Kaledin}, and Lipshitz and Treumann use a Tate
construction on chain complexes in characteristic $2$~\cite{LipshitzTreumann}.
Our construction is self-contained and carried out entirely on chains.  It applies to any differential module
with a finite free flag over a ring of characteristic $2$; over a regular local ring, Kunz's theorem
supplies the flatness that collapses the Adams spectral sequence to Theorem~\ref{thm:main}.

\medskip\noindent\textbf{Acknowledgments.}\ I am deeply grateful to Mark E.~Walker for a careful reading
of the manuscript and for many valuable comments that significantly improved the original arguments.
I thank Benjamin Briggs for valuable comments on an earlier draft that led to the projective-flag
formulation of Theorem~\ref{cor:TRC}.

\section{The Tate construction and the rank bound}\label{sec:deduce}

Throughout this section $A$ is a commutative Noetherian ring with $2=0$, and all tensor products are
over $A$.  A \emph{differential $A$-module} is an $A$-module $P$ with an $A$-linear endomorphism
$d_P$ satisfying $d_P^{2}=0$.  Its homology is $H(P)=\ker d_P/\im d_P$.  A morphism commutes with
the differentials, and morphisms $f,g\colon P\to Q$ are homotopic if
$f+g=d_Q\sigma+\sigma d_P$ for some $A$-linear map $\sigma\colon P\to Q$.  A short exact sequence
$0\to X\to Y\to Z\to0$ of differential modules induces the exact homology sequence
\[
\cdots\longrightarrow H(X)\longrightarrow H(Y)\longrightarrow H(Z)\longrightarrow
H(X)\longrightarrow\cdots.
\]

A differential module is \emph{finite projective} (respectively, \emph{finite free}) if its
underlying module is finitely generated projective (respectively, finite free).  A \emph{free flag}
on $P$ is a finite filtration
\[
0=P^{0}\subset P^{1}\subset\cdots\subset P^{r}=P
\]
by direct summands whose subquotients are finite free and which the differential \emph{drops},
$d_P(P^{i})\subseteq P^{i-1}$; then $P$ is finite free, and $\rank_A(P)$ denotes its rank.  A
\emph{projective flag} is defined in the same way, with finitely generated projective subquotients;
it makes $P$ finite projective, with locally constant rank.  Moreover, a projective flag exhibits
$P$ as an object of the thick subcategory of the homotopy category generated by $A$ with zero
differential; thus $P$ is perfect in the usual triangulated sense.  Differential modules, free
flags, and projective flags were studied by Avramov, Buchweitz, and Iyengar~\cite{ABI}.  For a bounded complex
of finitely generated projective modules, the direct-sum differential module $\bigoplus_nP_n$ has a
projective flag by homological degree; when the terms are free, it has a free flag;
a flag requires no grading, and the differential need not drop it by exactly one step.  When $H(C)$ has finite
length we write $\htot(C)=\length_AH(C)$.

Finally, $A_F$ denotes $A$ as an $A$-bimodule with the regular left action
and the right action twisted by Frobenius, $x\cdot a=xa^{2}$, and $F_A^{*}(-)=A_F\otimes_A(-)$; when $A=R$,
this agrees with the Frobenius pullback of the introduction, and if the differential of $P$ is represented in a
basis by a matrix $(a_{\lambda\mu})$, that of $F_A^{*}P$ is represented by $(a_{\lambda\mu}^2)$.

The following homotopical consequence of a projective flag will be used repeatedly.  The flag hypothesis
is essential: the differential module $(A,\epsilon\cdot)$ over
$A=k[\epsilon]/(\epsilon^2)$ is acyclic but not contractible.

\begin{lemma}\label{lem:contract}
An acyclic differential $A$-module with a projective flag is contractible, and every
quasi-isomorphism between differential $A$-modules with projective flags is a homotopy
equivalence.
\end{lemma}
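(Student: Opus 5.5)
We argue by induction on the length $r$ of a projective flag, starting with the first assertion. Let $P$ be acyclic with projective flag $0=P^0\subset\cdots\subset P^r=P$. The approach is to show that the identity of $P$ is null-homotopic, i.e.\ to produce $\sigma$ with $d\sigma+\sigma d=\id_P$. The key input is that a differential module whose underlying module is projective and which has a flag is \emph{K-projective} in the sense that $\mathrm{Hom}$ out of it, up to homotopy, kills acyclic modules. Concretely, we prove the stronger statement: if $P$ has a projective flag and $X$ is any acyclic differential module, then every morphism $f\colon P\to X$ is null-homotopic. Applying this with $X=P$ and $f=\id_P$ gives contractibility.

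The stronger statement is proved by induction on $r$. For $r=1$ we have $d_P=0$ and $P$ is projective. Since $f$ commutes with differentials, $d_Xf=0$, so $f$ lands in $\ker d_X=\im d_X$. By projectivity, $f$ lifts along the surjection $X\to\im d_X$, giving $\sigma$ with $d_X\sigma=f$; this is the required homotopy because $\sigma d_P=0$.

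For the inductive step, write $P'=P^{r-1}$, which carries a flag of length $r-1$ and is a differential submodule because $d_P(P')\subseteq P^{r-2}\subseteq P'$. Choose a splitting $P=P'\oplus Q$ with $Q\cong P/P'$ projective; since $d_P(P)\subseteq P'$, the differential takes the form $d_P(p',q)=(d'p'+\delta q,0)$ for some $\delta\colon Q\to P'$. By induction $f|_{P'}=d_X\sigma'+\sigma'd'$. We then extend $\sigma'$ across $Q$. The obstruction is $g=f|_Q+\sigma'\delta\colon Q\to X$, and we need to show $d_Xg=0$. A direct check gives $d_Xf(q)=f(\delta q)=d_X\sigma'\delta q+\sigma'd'\delta q$. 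Since $d_P^2=0$, we have $d'\delta=0$, so $d_Xf(q)=d_X\sigma'\delta q$ and hence $d_Xg=0$ in characteristic $2$ (in general a sign enters, which is harmless). Thus $g$ maps into $\ker d_X=\im d_X$, and projectivity of $Q$ gives $\tau\colon Q\to X$ with $d_X\tau=g$. Setting $\sigma=\sigma'\oplus\tau$ yields $d_X\sigma+\sigma d_P=f$, which we verify on $P'$ and on $Q$ separately. This bookkeeping is the main point to check carefully.

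For the second assertion, let $\phi\colon P\to Q$ be a quasi-isomorphism between modules with projective flags. Its mapping cone $C=(Q\oplus P,\ \begin{psmallmatrix} d_Q&\phi\\0&d_P\end{psmallmatrix})$ is a differential module, using that $2=0$. It is acyclic by the long exact sequence of $0\to Q\to C\to P\to 0$, whose connecting map is $H(\phi)$. It also has a projective flag: take the flag of $Q$ followed by $Q\oplus P^i$, which the differential drops. By the first part $C$ is contractible, and the standard cone argument then converts a contraction $\begin{psmallmatrix} a&b\\ c&e\end{psmallmatrix}$ of $C$ into a homotopy inverse $c\colon Q\to P$ of $\phi$, with the entries $a$ and $e$ supplying the homotopies $\phi c\simeq\id_Q$ and $c\phi\simeq\id_P$. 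Extracting these identities from the matrix equation is routine. The expected obstacle is only the inductive extension in the first part, and in particular ensuring that the chosen splitting of the flag makes $d_P$ block upper triangular as above.
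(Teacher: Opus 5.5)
Your proof is correct. The second half follows the paper's argument: the same cone $Q\oplus P$ with differential $(y,x)\mapsto(d_Qy+\phi x,\,d_Px)$, the same flag on it, and acyclicity from the connecting map $H(\phi)$. A contraction is then turned into a homotopy inverse. One identity is worth stating when you extract the entries. The lower-left entry of $d_Ch+hd_C=\id$ reads $d_Pc+cd_Q=0$, which in characteristic $2$ says that $c$ is a morphism. That is what makes $c$ an actual homotopy inverse, with $a$ and $e$ giving $\phi c\simeq\id_Q$ and $c\phi\simeq\id_P$.

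The first half takes a different route. The paper gives no argument of its own there; it cites Theorem~2.3 of Avramov--Buchweitz--Iyengar, which says that an acyclic retract of a differential module with a projective flag is contractible. You instead prove directly, by induction along the flag, a stronger semi-projectivity statement: every morphism from a module with a projective flag into an arbitrary acyclic differential module is null-homotopic. Both the base case and the extension step check out. The extension step lifts $f|_Q+\sigma'\delta$ through $X\to\im d_X=\ker d_X$, using $d'\delta=0$.

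This route makes the lemma self-contained and elementary. It also recovers the cited retract version at once. Suppose $P$ is acyclic, $F$ has a projective flag, and $p\colon F\to P$, $i\colon P\to F$ satisfy $pi=\id_P$. Write $p=d_P\sigma+\sigma d_F$; then $\id_P=d_P(\sigma i)+(\sigma i)d_P$. The paper's citation is shorter, but it relies on the external reference for the only nontrivial step.
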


\begin{proof}
The first assertion is a special case of a theorem of Avramov, Buchweitz, and
Iyengar~\cite[Theorem~2.3]{ABI}: an acyclic retract of a differential module admitting a projective
flag is contractible.  For the second assertion, let $u\colon P\to Q$ be a morphism between
differential modules with projective flags.  Its mapping cone is $Q\oplus P$ with differential
$(y,x)\mapsto(d_Qy+ux,\,d_Px)$.  First taking the flag of $Q$ and then the inverse images of the
flag of $P$ under $Q\oplus P\to P$ gives the cone a projective flag.  If $u$ is a quasi-isomorphism, the
exact homology sequence for $0\to Q\to Q\oplus P\to P\to0$, whose connecting map is $H(u)$, shows
that the cone is acyclic.  A contracting homotopy for the cone then yields a homotopy inverse to
$u$.
\end{proof}

The group $C_2=\{1,\tau\}$ acts on $P\otimes P$ by the transposition $\tau(x\otimes y)=y\otimes x$,
which commutes with $\partial=d_P\otimes1+1\otimes d_P$.  Set
\begin{equation}\label{eq:tate}
 \Tate_A(P)=\bigl(P\otimes P,\ \partial+(1+\tau)\bigr),
\end{equation}
the \emph{Tate construction} on $P$.  For a morphism $f\colon P\to Q$, set
$\Tate_A(f)=f\otimes f$.  This is a differential module, since
\[
(\partial+(1+\tau))^2
 =2(d_P\otimes d_P)+2(1+\tau)\partial+2(1+\tau)=0.
\]

\begin{remark}\label{rem:chartwo}
Differential modules and free flags make sense over any ring~\cite{ABI}; characteristic $2$ is
exactly what makes the usual tensor-product differential work without a grading.  With no Koszul
signs, this endomorphism on $P\otimes Q$ is $d_P\otimes1+1\otimes d_Q$, whose square is
$2(d_P\otimes d_Q)$; it is a differential for all $P,Q$ exactly when $2=0$.  This gives a symmetric
monoidal structure, with symmetry the transposition $\tau$ in~\eqref{eq:tate}, and differential
modules admitting free flags are closed under tensor products, by
$(P\otimes Q)^{k}=\sum_{i+j=k}P^{i}\otimes Q^{j}$.
\end{remark}

\begin{lemma}\label{lem:htpy}
The construction $\Tate_A$ preserves homotopies and is additive up to
homotopy, so it descends to the homotopy category of finite projective differential $A$-modules.  It is
\emph{exact}: every short exact sequence $0\to X\to Y\to Z\to0$ of finite projective differential $A$-modules
induces the exact homology sequence
\[
\cdots\to H\bigl(\Tate_A(X)\bigr)\to H\bigl(\Tate_A(Y)\bigr)\to H\bigl(\Tate_A(Z)\bigr)\to
H\bigl(\Tate_A(X)\bigr)\to\cdots.
\]
\end{lemma}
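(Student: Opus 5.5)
We must prove three things about $\Tate_A$: it preserves homotopies, it is additive up to homotopy, and it is exact on short exact sequences of finite projective differential modules.

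\emph{Homotopies.} Suppose $f+g=d_Q\sigma+\sigma d_P$. We want $f\otimes f+g\otimes g$ to be null-homotopic for the differential $D=\partial+(1+\tau)$. Write
\[
f\otimes f+g\otimes g=(f+g)\otimes f+g\otimes(f+g).
\]
Set $h=f+g$, so that $h=[d,\sigma]$. The candidate homotopy is $\Sigma=\sigma\otimes f+g\otimes\sigma$. A direct computation gives
\[
[\partial,\Sigma]=(f+g)\otimes f+g\otimes(f+g),
\]
using that $f,g$ are chain maps and that there are no signs. We still have to account for $[1+\tau,\Sigma]$, which equals $\sigma\otimes f+g\otimes\sigma+f\otimes\sigma+\sigma\otimes g$. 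Since $\tau\Sigma\tau=f\otimes\sigma+\sigma\otimes g$, this is $\Sigma+\tau\Sigma\tau$, which in general is nonzero. The fix is to add a correction term $\Sigma'=\sigma\otimes\sigma$ (possibly composed with $\tau$). Then $[\partial,\sigma\otimes\sigma]=h\otimes\sigma+\sigma\otimes h$, and after expanding $h$ one should find that this cancels the cross terms $f\otimes\sigma+\sigma\otimes f$ modulo $(1+\tau)$-contributions. I expect this to be the main obstacle. I would try $\Sigma=\sigma\otimes f+g\otimes\sigma+(\sigma\otimes\sigma)\tau$ or a similar variant and check the identity by brute expansion, using $d\sigma+\sigma d=f+g$ and $\tau(a\otimes b)\tau=b\otimes a$. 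This is the standard fact that the $C_2$-Tate construction of the squaring functor is homotopy invariant.

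\emph{Additivity.} For $f+g$, expand
\[
(f+g)\otimes(f+g)=f\otimes f+g\otimes g+(f\otimes g+g\otimes f).
\]
The cross term equals $(1+\tau)(f\otimes g)$ up to conjugation: precisely, $f\otimes g+g\otimes f=(1+\tau)(f\otimes g)+(f\otimes g)(1+\tau)$ when one uses $\tau(f\otimes g)=(g\otimes f)\tau$. Since $f\otimes g$ commutes with $\partial$, this is $[D,f\otimes g]$, a null-homotopic map. Hence $\Tate_A$ descends to the homotopy category.

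\emph{Exactness.} Given $0\to X\to Y\to Z\to0$ with finite projective terms, the sequence splits as modules, so $Y\cong X\oplus Z$ with $d_Y=\begin{pmatrix}d_X&\theta\\0&d_Z\end{pmatrix}$. Filter $Y\otimes Y$ by
\[
X\otimes X\ \subset\ X\otimes Y+Y\otimes X\ \subset\ Y\otimes Y.
\]
Both pieces are $D$-stable, because $\tau$ preserves them and $\partial$ does too. The quotients are $\Tate(X)$, then $(X\otimes Z)\oplus(Z\otimes X)$ with $\tau$ swapping the summands, and finally $\Tate(Z)$. The middle quotient is the differential module $M\otimes M'$ with $1+\tau$ mixing the two summands. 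It is contractible: the map $\sigma(x\otimes z,\,z'\otimes x')=(0,\,x\otimes z\mapsto z\otimes x)$, i.e. the projection onto the first summand transported by $\tau$, satisfies $[D,\sigma]=\id$ up to a nilpotent error coming from $\partial$. The error is handled by using the flag on finite projective modules, or simply by noting that $\partial\sigma+\sigma\partial$ perturbs the identity by a locally nilpotent map when $\partial$ is dropped by a filtration. More simply, the subcomplex $X\otimes Z$ is moved isomorphically by $1+\tau$, so the cone-type argument shows acyclicity. Therefore $H$ of the middle piece vanishes, and $\Tate(Y)$ sits in an extension of $\Tate(Z)$ by a module quasi-isomorphic to $\Tate(X)$. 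The long exact homology sequences of the two short exact sequences combine to give the asserted exact triangle in homology.
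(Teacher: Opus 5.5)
Your plan matches the paper's. The homotopy $\sigma\otimes f+g\otimes\sigma+(\sigma\otimes\sigma)\tau$ you list first is exactly the paper's $K$, and your filtration $X\otimes X\subset X\otimes Y+Y\otimes X\subset Y\otimes Y$ is the paper's. Your $\sigma(a,b)=(0,\tau a)$ is the same kind of contraction as the paper's $h(m_1,m_2)=(m_2,0)$.

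\textbf{Homotopy-level steps.} As written, the additivity step is false and the other two steps rest on a miscalculation. The source is one slip. Writing $[D,K]=DK+KD$, one has $[1+\tau,\phi]=\tau\phi+\phi\tau=(\tau\phi\tau+\phi)\tau$, and you drop the trailing $\tau$.
\begin{itemize}
\item \emph{Additivity.} Here $(1+\tau)(f\otimes g)+(f\otimes g)(1+\tau)=(g\otimes f+f\otimes g)\tau$, not $f\otimes g+g\otimes f$. So $f\otimes g$ is not a null-homotopy of the cross term. The correct one, used in the paper, is $(f\otimes g)\tau$. It commutes with $\partial$, and $[1+\tau,(f\otimes g)\tau]=g\otimes f+f\otimes g$.
\item \emph{Homotopy invariance.} Correctly, $[1+\tau,\sigma\otimes f+g\otimes\sigma]=\bigl((f+g)\otimes\sigma+\sigma\otimes(f+g)\bigr)\tau$. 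Since $\tau$ commutes with $\partial$ and $d\sigma+\sigma d=f+g$, this is exactly $[\partial,(\sigma\otimes\sigma)\tau]$. Also $[1+\tau,(\sigma\otimes\sigma)\tau]=\tau(\sigma\otimes\sigma)\tau+\sigma\otimes\sigma=0$. Combined with $[\partial,\sigma\otimes f+g\otimes\sigma]=f\otimes f+g\otimes g$, this verifies the paper's $K$. The untwisted correction $\sigma\otimes\sigma$ would leave $\bigl((f+g)\otimes\sigma+\sigma\otimes(f+g)\bigr)(1+\tau)$ uncancelled, so the $\tau$ is forced, not a matter of taste.
\end{itemize}

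\textbf{Exactness.} There is no ``nilpotent error coming from $\partial$.'' On the middle subquotient, the components of $d_Y$ with values in $X$ land in $X\otimes X$ and die. So the induced differential is $d_{X\otimes Z}\oplus d_{Z\otimes X}$, which $\tau$ intertwines. Hence your $\sigma$ commutes with it, and $[\partial,\sigma]=2\sigma\partial=0$. A direct check gives $[1+\tau,\sigma]=\id$, so $[D,\sigma]=\id$ exactly.

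This needs saying, because your fallbacks are unavailable. The lemma assumes only finite projective differential modules, with no flag, so your perturbation argument has nothing to stand on. The ``cone-type argument'' is not spelled out.

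With the exact contraction, the rest of your argument is correct. The inclusion $X\otimes X\to X\otimes Y+Y\otimes X$ is a quasi-isomorphism, and the long exact sequence of $0\to X\otimes Y+Y\otimes X\to\Tate_A(Y)\to\Tate_A(Z)\to0$ gives the claim. This is equivalent to the paper's use of $0\to\Tate_A(X)\to\Tate_A(Y)\to\Tate_A(Y)/\Tate_A(X)\to0$ together with the quasi-isomorphism $\Tate_A(Y)/\Tate_A(X)\to\Tate_A(Z)$.
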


\begin{proof}
Write $D_P$ and $D_Q$ for the Tate differentials
on $P\otimes P$ and $Q\otimes Q$, respectively.  For homotopy invariance, if
$f+g=d_Q\sigma+\sigma d_P$, then $K=\sigma\otimes f+g\otimes\sigma+(\sigma\otimes\sigma)\tau$ satisfies
$D_QK+KD_P=f^{\otimes2}+g^{\otimes2}$; hence $f\simeq g$ implies
$\Tate_A(f)\simeq\Tate_A(g)$.  For additivity,
$\Tate_A(f+g)=\Tate_A(f)+\Tate_A(g)+(f\otimes g+g\otimes f)$, and the cross term is
null-homotopic via $(f\otimes g)\tau$.

For exactness, filter $Y\otimes Y$ by the $C_2$-stable differential submodules
\[
F_0=X\otimes X,\qquad F_1=(X\otimes Y)+(Y\otimes X),\qquad F_2=Y\otimes Y,
\]
with associated graded $X\otimes X$, $(X\otimes Z)\oplus(Z\otimes X)$, and $Z\otimes Z$.
Since $Z$ is projective, the underlying sequence splits, so this filtration is split as modules.
The outer subquotients are $\Tate_A(X)$ and
$\Tate_A(Z)$.  The middle subquotient is $M\oplus M$, where $M=X\otimes Z$, with $\tau$
swapping the two summands and differential $D=d_M\oplus d_M+(1+\tau)$.  The operator
$h(m_1,m_2)=(m_2,0)$ satisfies $Dh+hD=\id$, since $(1+\tau)h+h(1+\tau)=\id$ in characteristic $2$ and
$h$ commutes with $d_M\oplus d_M$.  Thus the middle subquotient is contractible,
$\Tate_A(Y)/\Tate_A(X)\to\Tate_A(Z)$ is a quasi-isomorphism, and the short exact sequence
$0\to\Tate_A(X)\to\Tate_A(Y)\to\Tate_A(Y)/\Tate_A(X)\to0$ yields the asserted exact homology
sequence.
\end{proof}

\begin{remark}
Lemma~\ref{lem:htpy} is the analogue, for chain complexes, of the exactness of the Frobenius twist functor
$X\mapsto(X^{\otimes2})^{tC_2}$ in stable homotopy theory; see
Nikolaus--Scholze~\cite[Proposition~III.1.1]{NS},
Carmeli~\cite[Proposition~2.10]{Carmeli}, and, for the Tate
construction on chains, Lawson~\cite[Proposition~2.13]{Lawson}.
\end{remark}

The following proposition gives the length comparison used in Theorem~\ref{thm:rank}.

\begin{proposition}\label{prop:size}
Let $(A,\mathfrak m)$ be local and let $P$ be a finite projective differential $A$-module with
$\htot(P\otimes P)<\infty$.  Set $B=A[s]_{(\mathfrak m,s)}$ and $\mathfrak p=\mathfrak mB$, and let
\[
M=H\bigl((P\otimes P)\otimes_AB,\ \partial+s(1+\tau)\bigr).
\]
Then $\length_{B_{\mathfrak p}}(M_{\mathfrak p})\le\htot(P\otimes P)$.
\end{proposition}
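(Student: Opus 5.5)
The plan is to compare $M$ with its reduction modulo $s$, then pass from that reduction to the localization at $\mathfrak p$ by a multiplicity argument for the single element $s$. Write $N=(P\otimes P)\otimes_AB$ and $D_s=\partial+s(1+\tau)$. Then $M=H(N,D_s)$ is a finitely generated $B$-module, since $N$ is finite projective over $B$.

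First I would note that multiplication by $s$ commutes with $D_s$ and is injective on $N$, because $N$ is projective over $B$ and $s$ is a nonzerodivisor on $B$. So there is a short exact sequence of differential $B$-modules
\[
0\longrightarrow N\xrightarrow{\ s\ }N\longrightarrow N/sN\longrightarrow0 .
\]
Here $N/sN\cong(P\otimes P)\otimes_A B/sB$ carries the differential $\partial$. Since $B/sB\cong A$, this quotient is just $(P\otimes P,\partial)$. The exact homology sequence then reads
\[
M\xrightarrow{\ s\ }M\longrightarrow H(P\otimes P)\longrightarrow M\xrightarrow{\ s\ }M .
\]
It gives an injection $M/sM\hookrightarrow H(P\otimes P)$. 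Hence $\length_B(M/sM)\le\htot(P\otimes P)<\infty$.

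Next I would deduce the bound at $\mathfrak p$. Because $M/sM$ has finite length, $\dim M\le1$ and $s$ is a parameter for $M$. If $\dim M=0$, then $M_{\mathfrak p}=0$, since $\mathfrak p\neq(\mathfrak m,s)$, and there is nothing to prove. Otherwise I would use the standard identity for one parameter: the multiplicity equals the Koszul Euler characteristic,
\[
e(s;M)=\length(M/sM)-\length(0:_Ms)\le\length(M/sM).
\]
Combine this with the associativity formula
\[
e(s;M)=\sum_{\mathfrak q}\length_{B_{\mathfrak q}}(M_{\mathfrak q})\,e(s;B/\mathfrak q),
\]
the sum running over primes $\mathfrak q\in\Supp M$ with $\dim B/\mathfrak q=1$. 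The prime $\mathfrak p=\mathfrak mB$ is such a prime: $B/\mathfrak p\cong k[s]_{(s)}$ is a DVR with uniformizer $s$, so $e(s;B/\mathfrak p)=1$. If $\mathfrak p\notin\Supp M$ the bound is trivial. All terms in the sum are nonnegative, so
\[
\length_{B_{\mathfrak p}}(M_{\mathfrak p})\le e(s;M)\le\length(M/sM)\le\htot(P\otimes P).
\]

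The main obstacle is the multiplicity step: justifying $e(s;M)=\chi(s;M)$ and the associativity formula for the one-element system $s$ on a module of dimension at most one. Both are standard (Serre, or Bruns–Herzog, Theorem 4.7.4 and Corollary 4.7.8). If a self-contained argument is preferred, I would instead use a prime filtration of $M$ and additivity of $\length(-/s)-\length(0:_{-}s)$ over short exact sequences. Finite-length factors contribute $0$. A factor $B/\mathfrak q$ with $\dim B/\mathfrak q=1$ contributes $\length(B/(\mathfrak q,s))\ge1$, with equality when $\mathfrak q=\mathfrak p$. The number of factors isomorphic to $B/\mathfrak p$ is exactly $\length(M_{\mathfrak p})$.
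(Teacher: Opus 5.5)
Your proposal is correct and follows the paper's proof essentially step for step. The short exact sequence $0\to N\xrightarrow{s}N\to(P\otimes P,\partial)\to0$ embeds $M/sM$ into $H(P\otimes P)$, and then the associativity formula, combined with $e(s;B/\mathfrak p)=1$ and $e(s;M)=\length(M/sM)-\length(0:_Ms)$, gives the bound on $\length_{B_{\mathfrak p}}(M_{\mathfrak p})$. Your optional prime-filtration argument is also sound and just unpacks those cited multiplicity facts; it implicitly needs $s\notin\mathfrak q$ for the one-dimensional factors, which holds because $M/sM$ has finite length.
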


\begin{proof}
Write $\mathcal C=((P\otimes P)\otimes_AB,\partial+s(1+\tau))$, so $M=H(\mathcal C)$ is a
finitely generated $B$-module.  Multiplication by $s$ is injective on $\mathcal C$, since $s$ is a
nonzerodivisor on $B$ and $\mathcal C$ is finite free; its cokernel is $(P\otimes P,\partial)$,
since $B/sB=A$ and the term $s(1+\tau)$ vanishes.  The exact homology sequence for
$0\to\mathcal C\xrightarrow{s}\mathcal C\to(P\otimes P,\partial)\to0$ therefore embeds $M/sM$ into
$H(P\otimes P)$, so
\[
\length_B(M/sM)\ \le\ \htot(P\otimes P)\ <\ \infty.
\]
Since $M/sM$ has finite length, $\dim M\le\dim(M/sM)+1\le1$.

If $\dim M\le0$ then $M_{\mathfrak p}=0$, since $\dim B/\mathfrak p=1$ shows that $\mathfrak p$ is
not the maximal ideal, and there is nothing to prove.  Otherwise $s$ is a parameter for the
one-dimensional module $M$, because $M/sM$ has finite length, and the associativity formula for
multiplicities~\cite[Corollary~4.7.8]{BrunsHerzog} (for its cycle formulation
see~\cite[Proposition~5.2.11]{Roberts}), applied over $B/\operatorname{ann}M$, gives
\[
e(s;M)=\sum_{\mathfrak q}\length_{B_{\mathfrak q}}(M_{\mathfrak q})\,e(s;B/\mathfrak q),
\]
the sum running over the primes $\mathfrak q\in\Supp M$ with $\dim B/\mathfrak q=1$.  Every summand is
nonnegative, and $B/\mathfrak p\cong(A/\mathfrak m)[s]_{(s)}$, a discrete valuation ring, gives
$e(s;B/\mathfrak p)=1$.  Thus $\length_{B_{\mathfrak p}}(M_{\mathfrak p})\le e(s;M)$ if
$\mathfrak p\in\Supp M$, while the left side is zero otherwise.  Finally, for the parameter $s$ the
multiplicity agrees with
Northcott's multiplicity symbol~\cite[Definition~4.7.3, Theorems~4.7.4 and~4.7.6]{BrunsHerzog}
(see also~\cite[Theorem~5.2.8, Corollary~5.2.9]{Roberts}),
which for a single element reads
\[
e(s;M)\ =\ \length_B(M/sM)-\length_B(0:_Ms)\ \le\ \length_B(M/sM)\ \le\ \htot(P\otimes P).\qedhere
\]
\end{proof}

\begin{lemma}\label{lem:tensorbound}
Let $X$ be a differential $A$-module whose homology has finite length and let $Y$ be a differential
$A$-module with a free flag.  Then $H(X\otimes_AY)$ has finite length and
\[
\htot(X\otimes_AY)\ \le\ \rank_A(Y)\,\htot(X).
\]
\end{lemma}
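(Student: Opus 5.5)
Induct on the length $r$ of the free flag $0=Y^0\subset Y^1\subset\cdots\subset Y^r=Y$, proving the slightly stronger statement for every differential submodule $Y^i$ with its induced flag. Since each $Y^{i}$ is a direct summand of $Y$ and $d_Y(Y^i)\subseteq Y^{i-1}\subseteq Y^i$, each $Y^i$ is a differential submodule carrying the free flag $Y^1\subset\cdots\subset Y^i$. Moreover $\rank_A(Y^i)=\sum_{j\le i}\rank_A(Y^j/Y^{j-1})$.

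First I treat the subquotients. Since $d_Y(Y^j)\subseteq Y^{j-1}$, the induced differential on $Y^j/Y^{j-1}$ is zero. Hence $Y^j/Y^{j-1}\cong A^{n_j}$ with zero differential, and $X\otimes_A(Y^j/Y^{j-1})\cong X^{\oplus n_j}$ as differential modules, with differential $d_X\otimes 1$. Its homology is $H(X)^{\oplus n_j}$, which has finite length $n_j\,\htot(X)$.

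Next I pass to the short exact sequences. Because $Y^{j-1}\subset Y^j$ is a direct summand, the sequence $0\to Y^{j-1}\to Y^j\to Y^j/Y^{j-1}\to0$ splits as $A$-modules. Tensoring with $X$ therefore gives a short exact sequence of differential modules
\[
0\to X\otimes_A Y^{j-1}\to X\otimes_A Y^j\to X\otimes_A(Y^j/Y^{j-1})\to 0,
\]
where the differentials $d_X\otimes1+1\otimes d_Y$ are compatible. The exact homology sequence stated at the beginning of Section~\ref{sec:deduce} contains the exact segment
\[
H(X\otimes Y^{j-1})\to H(X\otimes Y^j)\to H(X\otimes(Y^j/Y^{j-1})).
\]
This gives $\length H(X\otimes Y^j)\le \length H(X\otimes Y^{j-1})+n_j\,\htot(X)$, and finite length is inherited from the outer terms. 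Summing over $j$ from the base case $Y^0=0$ yields $\htot(X\otimes_AY)\le\sum_j n_j\,\htot(X)=\rank_A(Y)\,\htot(X)$.

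The only point needing care is that $X$ has no flag or projectivity hypothesis. So exactness of the tensored sequence must come from the module splitting on the $Y$ side, which holds because the flag is by direct summands with free subquotients. Everything else is bookkeeping with the long (here two-periodic) exact sequence and additivity of length.
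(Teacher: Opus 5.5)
Your proposal is correct and follows essentially the same route as the paper. You tensor the split free flag with $X$, observe that the subquotients are $X^{\oplus n_j}$, and run the exact homology sequences inductively to obtain finite length and the bound $\htot(X\otimes_AY)\le\sum_j n_j\,\htot(X)=\rank_A(Y)\,\htot(X)$. You also correctly note that exactness of the tensored sequences comes from the module splitting on the $Y$ side, since $X$ carries no flatness hypothesis.
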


\begin{proof}
For a free flag $Y^\bullet$, put $r_i=\rank_A(Y^i/Y^{i-1})$.  Tensoring this split filtration with
$X$ gives a filtration of $X\otimes_A Y$ with differential module subquotients $X^{\oplus r_i}$.
The resulting exact homology sequences show inductively that each $X\otimes_A Y^i$ has
finite-length homology and that
\[
\htot(X\otimes_A Y)\le\sum_i r_i\htot(X)=\rank_A(Y)\htot(X).\qedhere
\]
\end{proof}

\begin{lemma}\label{lem:scaling}
Let $R$ be a regular local ring of characteristic $2$ and dimension $d$.  For every finite-length
$R$-module $N$, $\length_R(\Rone\otimes_RN)=2^{d}\length_R(N)$.
\end{lemma}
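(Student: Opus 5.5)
Since $\Rone\otimes_R(-)$ is exact (Kunz's theorem: $R$ regular means the Frobenius is flat), the length formula will follow by d\'evissage from its value on the residue field. Every finite-length module $N$ has a composition series with all factors $k=R/\mathfrak m$. Tensoring this series with the flat module $\Rone$ gives a filtration of $\Rone\otimes_RN$ whose subquotients are all $\Rone\otimes_Rk$. Both sides of the claimed identity are therefore additive on short exact sequences, and it suffices to show $\length_R(\Rone\otimes_Rk)=2^d$.

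Next I identify $\Rone\otimes_Rk$ explicitly. The right $R$-action on $\Rone$ is through $r\mapsto r^2$, so
\[
\Rone\otimes_R R/\mathfrak m\;\cong\;R/\mathfrak m^{[2]}R ,
\]
where $\mathfrak m^{[2]}$ is the ideal generated by the squares of elements of $\mathfrak m$. If $x_1,\dots,x_d$ is a regular system of parameters, then in characteristic $2$ the square of $\sum a_ix_i$ equals $\sum a_i^2x_i^2$. Hence $\mathfrak m^{[2]}=(x_1^2,\dots,x_d^2)$.

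It remains to compute $\length_R\bigl(R/(x_1^2,\dots,x_d^2)\bigr)=2^d$. Since $x_1,\dots,x_d$ is a regular sequence, I would argue in one of two ways.
\begin{itemize}
\item By the standard fact that $\length R/(x_1^{a_1},\dots,x_d^{a_d})=a_1\cdots a_d\,\length R/(x_1,\dots,x_d)$ for a regular sequence. This can be proved by induction using the exact sequences $0\to R/(x_1,\underline{y})\xrightarrow{x_1}R/(x_1^{2},\underline{y})\to R/(x_1,\underline{y})\to0$, where injectivity uses that $x_1$ is regular modulo the remaining powers $\underline{y}$ (powers of a regular sequence remain regular).
\item Alternatively, by noting that $R/(x^2)$ has a filtration by the $2^d$ monomials $x^\alpha$ with $\alpha\in\{0,1\}^d$, each subquotient isomorphic to $k$. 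This can be checked after passing to the associated graded ring $k[x_1,\dots,x_d]$.
\end{itemize}
Since $\length R/\mathfrak m=1$, this gives $2^d$.

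I expect the only delicate step to be the injectivity in that induction, i.e.\ the regular-sequence bookkeeping. It is standard, and it could also be bypassed by passing to the completion $k[[x_1,\dots,x_d]]$, where $R/(x^2)$ visibly has $k$-basis $\{x^\alpha:\alpha\in\{0,1\}^d\}$.
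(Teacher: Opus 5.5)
Your proposal is correct and follows the paper's proof: Kunz's theorem gives exactness of $R_F\otimes_R-$, a composition series reduces the claim to $N=k$, and $R_F\otimes_R k\cong R/\mathfrak m^{[2]}=R/(x_1^2,\dots,x_d^2)$ has length $2^d$ because $x_1,\dots,x_d$ is a regular sequence. The only difference is that you write out this final length count, via the exact sequences $0\to R/(x_1,\underline y)\xrightarrow{x_1}R/(x_1^2,\underline y)\to R/(x_1,\underline y)\to 0$ or via the associated graded ring or the completion, whereas the paper leaves it as a standard fact.
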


\begin{proof}
By Kunz's theorem~\cite[Theorem~2.1]{Kunz} the Frobenius of $R$ is flat, so $\Rone\otimes_R-$ is exact,
and by additivity along a composition series the claim reduces to the residue field
$N=k=R/\mathfrak m$.  For a
regular system of parameters $x_1,\dots,x_d$ generating the maximal ideal $\mathfrak m$, the Frobenius
power $\mathfrak m^{[2]}=(r^2\mid r\in\mathfrak m)$ equals $(x_1^2,\dots,x_d^2)$; since
$\Rone\mathfrak m=\mathfrak m^{[2]}$, we get $\Rone\otimes_Rk=R/(x_1^2,\dots,x_d^2)$, of length $2^{d}$
because $x_1,\dots,x_d$ is a regular sequence.
\end{proof}

\begin{theorem}\label{thm:rank}
Let $R$ be a regular local ring of characteristic $2$ and dimension $d$, and let $P$ be a differential
$R$-module admitting a free flag, with $0<\htot(P)<\infty$.  Then $\rank_R(P)\ge2^{d}$.
\end{theorem}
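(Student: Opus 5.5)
The plan is to run the chain of inequalities from the introduction,
\[
 2^d\,\htot(P)=\htot(\Rone\otimes_RH(P))=\htot(\Tate_R(P))\le\htot(P\otimes_RP)\le\rank_R(P)\,\htot(P),
\]
and then cancel $\htot(P)>0$. I would take Theorem~\ref{thm:main} as given, since the paper stated it earlier; it is proved later via the Adams spectral sequence. It gives $H(\Tate_R(P))\cong\Rone\otimes_RH(P)$, and this module has length $2^d\htot(P)$ by Lemma~\ref{lem:scaling}. So the first two equalities need nothing new. The last inequality is Lemma~\ref{lem:tensorbound} with $X=Y=P$. That lemma also shows $\htot(P\otimes P)<\infty$, which is exactly the hypothesis of Proposition~\ref{prop:size}.

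The real work is the middle inequality $\htot(\Tate_R(P))\le\htot(P\otimes P)$. I would feed Proposition~\ref{prop:size} with $A=R$. Let $B=R[s]_{(\mathfrak m,s)}$, $\mathfrak p=\mathfrak mB$, and let $M$ be the homology of the deformed differential $\partial+s(1+\tau)$. The proposition bounds $\length_{B_\mathfrak p}(M_\mathfrak p)$ by $\htot(P\otimes P)$, so it suffices to identify $\length_{B_\mathfrak p}(M_\mathfrak p)$ with $\htot(\Tate_R(P))$.

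In $B_\mathfrak p$ the element $s$ is a unit, and $B_\mathfrak p$ is again a regular local ring of characteristic $2$ and dimension $d$. Its maximal ideal is $\mathfrak mB_\mathfrak p$ and its residue field is $k(s)$. Consider the differential module $P'=(P\otimes_RB_\mathfrak p,\ s^{-1}d_P)$. It has a free flag, namely the base change of the flag of $P$, and its homology is $H(P)\otimes_RB_\mathfrak p$, since rescaling the differential by a unit does not change kernels or images. Its Tate differential is $s^{-1}\partial+(1+\tau)=s^{-1}(\partial+s(1+\tau))$, which again differs from $\partial+s(1+\tau)$ by a unit. Hence $M_\mathfrak p\cong H(\Tate_{B_\mathfrak p}(P'))$, using flatness of $B\to B_\mathfrak p$ to commute homology with localization.

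Apply Theorem~\ref{thm:main} over $B_\mathfrak p$ to get $M_\mathfrak p\cong (B_\mathfrak p)_F\otimes H(P)\otimes_RB_\mathfrak p$. By Lemma~\ref{lem:scaling} over $B_\mathfrak p$ its length is $2^d\length_{B_\mathfrak p}(H(P)\otimes_RB_\mathfrak p)$. The map $R\to B_\mathfrak p$ is flat local with closed fiber a field, so it preserves lengths, and this equals $2^d\htot(P)$. Combined with the first paragraph, this is $\htot(\Tate_R(P))$.

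I expect this identification of the generic fiber with a Tate construction to be the main obstacle. The points to check carefully are that $B_\mathfrak p$ satisfies the hypotheses of Theorem~\ref{thm:main} (regular local, characteristic $2$, and $P'$ has a finite free flag), and the length preservation under the flat extension $R\to B_\mathfrak p$. With that in place the chain gives $2^d\htot(P)\le\rank_R(P)\htot(P)$, and cancelling $\htot(P)>0$ yields $\rank_R(P)\ge2^d$.
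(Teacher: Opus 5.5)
Your proposal is correct and follows the paper's proof essentially step for step. You use Lemma~\ref{lem:tensorbound} to enable Proposition~\ref{prop:size}, then the rescaled module $(P_{\mathfrak p},s^{-1}d_P)$ over the regular local ring $B_{\mathfrak p}$ to identify $M_{\mathfrak p}$ with a Tate construction, and then Theorem~\ref{thm:main} with Lemma~\ref{lem:scaling} to get $\length_{B_{\mathfrak p}}(M_{\mathfrak p})=2^d\htot(P)$. The only difference is cosmetic: your detour through $\htot(\Tate_R(P))$ over $R$ itself is unnecessary, since the argument only needs the chain $2^d\htot(P)=\length_{B_{\mathfrak p}}(M_{\mathfrak p})\le\htot(P\otimes P)\le\rank_R(P)\,\htot(P)$.
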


\begin{proof}
Lemma~\ref{lem:tensorbound} gives $\htot(P\otimes P)\le\rank_R(P)\,\htot(P)<\infty$, so
Proposition~\ref{prop:size} applies; keep its notation $B=R[s]_{(\mathfrak m,s)}$,
$\mathfrak p=\mathfrak mB$, and $M$.  The local ring $B_{\mathfrak p}=R[s]_{\mathfrak mR[s]}$ is
regular of dimension $d$.  The base change $P_{\mathfrak p}=P\otimes_RB_{\mathfrak p}$ is finite free
with a free flag, and flatness gives $H(P_{\mathfrak p})=H(P)\otimes_RB_{\mathfrak p}$, of finite
length $\htot(P)$: a composition series of $H(P)$ base changes to one with factors
$B_{\mathfrak p}/\mathfrak mB_{\mathfrak p}$.  Moreover $s$ is a unit in $B_{\mathfrak p}$, and
set $Q=(P_{\mathfrak p},s^{-1}d_{P_{\mathfrak p}})$.  It has the same free flag and homology as
$P_{\mathfrak p}$.  Since
\[
 \partial_{P_{\mathfrak p}}+s(1+\tau)=s\bigl(\partial_Q+(1+\tau)\bigr),
\]
and scaling a differential by a unit changes neither its kernel nor its image, exactness of
localization gives $M_{\mathfrak p}=H(\Tate_{B_{\mathfrak p}}(Q))$.  Since $B_{\mathfrak p}$ is
regular, Theorem~\ref{thm:main} gives
$M_{\mathfrak p}\cong(B_{\mathfrak p})_F\otimes_{B_{\mathfrak p}}H(P_{\mathfrak p})$, of length
$2^{d}\,\htot(P)$ by Lemma~\ref{lem:scaling} over $B_{\mathfrak p}$.  Combining with
Proposition~\ref{prop:size} and Lemma~\ref{lem:tensorbound},
\[
2^{d}\,\htot(P)\ =\ \length_{B_{\mathfrak p}}(M_{\mathfrak p})\ \le\ \htot(P\otimes P)\ \le\
\rank_R(P)\,\htot(P),
\]
and canceling $\htot(P)>0$ yields the bound.
\end{proof}

\begin{proof}[Proof of Theorem~\ref{cor:TRC}]
Set $N=H(P)$; here $\codim_RN=\min\{\height\mathfrak q:\mathfrak q\in\Supp_RN\}$, so we may choose
$\mathfrak q\in\Supp_R N$ with $\height\mathfrak q=\codim_R N$.  Then $\mathfrak q$ is minimal in
$\Supp_R N$, and $N_\mathfrak q$ is nonzero of finite length over the regular local ring
$R_\mathfrak q$ of dimension $\height\mathfrak q$.  The localized module $P_\mathfrak q$ is a
differential $R_\mathfrak q$-module with a free flag, finitely generated projectives over a local
ring being free, so Theorem~\ref{thm:rank} gives
$\rank_{R_\mathfrak q}(P_\mathfrak q)\ge2^{\height\mathfrak q}=2^{\codim_RN}$.  The rank function of
the projective module $P$ is locally constant, hence constant on the connected spectrum of the
domain $R$, so the localized rank equals $\rank_R(P)$, which proves the bound.
\end{proof}

\begin{remark}\label{rem:domain}
The domain hypothesis only ensures that the rank is well defined: a regular Noetherian ring, being
locally a domain, is a domain exactly when its spectrum is connected, and connectedness is what
makes the rank of a finitely generated projective module a single integer rather than a locally
constant function.
\end{remark}

Carlsson's algebraic formulation concerns dg modules over $S=k[x_1,\dots,x_d]$ with $|x_i|=-1$ and $k$ a field of
characteristic $2$: graded $S$-modules with an $S$-linear differential that is homogeneous of degree
$-1$.  After forgetting the grading these are differential $S$-modules; the grading supplies a free
flag and, for finite-dimensional homology, identifies its support with
$\{\mathfrak m\}$.  Set $\mathfrak m=(x_1,\dots,x_d)$, and call a dg $S$-module \emph{finite free}
when its underlying graded module is finite free.  A finite free dg
$S$-module $P$ is \emph{minimal} if $d_P(P)\subseteq\mathfrak m P$, equivalently if $P\otimes_S k$ has
zero differential.  Homogeneous Gaussian elimination on unit entries decomposes every finite free dg
$S$-module as the direct sum of a minimal module and a contractible module.

\begin{lemma}\label{lem:flag}
Every finite free dg $S$-module admits a free flag; consequently, by Theorem~\ref{cor:TRC}, every
finite free dg $S$-module $P$ with $0<\dim_kH(P)<\infty$ satisfies $\rank_S(P)\ge2^{d}$.
\end{lemma}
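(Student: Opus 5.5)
The first step is to construct a free flag directly from the grading; the rank bound will then follow from Theorem~\ref{cor:TRC}. Let $P$ be a finite free dg $S$-module, with homogeneous basis $e_1,\dots,e_n$ of degrees $\deg e_j$. Since $P$ is finitely generated, only finitely many degrees occur, say $a_1<a_2<\cdots<a_r$. Set
\[
P^{i}=\bigoplus_{\deg e_j\le a_i}S\,e_j .
\]
Each $P^i$ is spanned by a subset of the basis, so it is a direct summand of $P$. The subquotient $P^i/P^{i-1}$ is free, with basis the $e_j$ of degree exactly $a_i$.

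The key point is that $d_P$ drops this filtration. The differential is homogeneous of degree $-1$ and the variables have degree $|x_i|=-1$. So for a basis element $e$ of degree $a_i$, the element $d_Pe$ is a homogeneous element of degree $a_i-1$. Write it as $\sum_j f_je_j$ with $f_j\in S$ homogeneous. Elements of $S$ have degree $\le0$, so
\[
\deg e_j=a_i-1-\deg f_j\ \ge\ a_i-1 .
\]
Hmm — with this sign convention the coefficients push the degree \emph{up}, not down. So I must filter in the opposite direction. Put
\[
P^{i}=\bigoplus_{\deg e_j\ge a_{r+1-i}}S\,e_j ,
\]
so that $P^1$ consists of the top degree and later steps add lower degrees. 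For $e$ of degree $a$, every $e_j$ appearing in $d_Pe$ has degree $\ge a-1$. The question is whether it could have degree $\ge a$, i.e.\ lie in the same step. That happens exactly when $\deg f_j\le-1$, which means $f_j\in\mathfrak m$ is a nonunit; this is a genuine possibility. A raw degree filtration therefore need not be strictly dropped.

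To fix this, I first use the stated decomposition $P=P_{\min}\oplus C$. Here $P_{\min}$ is minimal and $C$ is contractible, and both are graded free. Flags of the two summands combine into a flag of the direct sum: take the termwise sum, padding the shorter flag by repetition. So it suffices to handle each summand separately. For a contractible graded free module, Gaussian elimination writes $C$ as a sum of pieces $Se\xrightarrow{1}Se'$; each has the flag $Se'\subset Se\oplus Se'$, which the differential drops.

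For the minimal part the degree argument above fails, so a different grading is needed. For the minimal summand $Q$, I would use the filtration $Q^{(k)}=\{\,q: d_Q\text{-nilpotence}\,\}$ — more concretely, an ABI-style filtration by $\ker$ of powers is unavailable in general. So instead I would try a total grading trick: regrade so that the differential becomes degree-lowering with respect to a new integer weight. Set $w(e_j)=\deg e_j$, but use the \emph{internal} polynomial degree. Each monomial $x^\alpha e_j$ then has the weight $\deg e_j+|\alpha|$ of the second index. This is the expected main obstacle: producing a weight in which the differential strictly decreases.

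My first attempt would be to exploit $|x_i|=-1$ with $d$ of degree $-1$: on $Q\otimes_Sk$ the differential is zero, so $d_Qe=\sum f_je_j$ with $f_j\in\mathfrak m$ homogeneous of degree $\deg e_j-\deg e+1$... Since $\deg f_j \le -1$, this forces $\deg e_j\le\deg e-2$. Rechecking the sign convention, $\deg(f_je_j)=\deg f_j+\deg e_j$, and this must equal $\deg e-1$. So $\deg e_j=\deg e-1-\deg f_j\ge\deg e$. Thus in the minimal case the degree strictly \emph{increases}, by at least $0$. If $\deg e_j=\deg e$ were possible we would need $\deg f_j=-1$, which is a linear form; degree alone cannot then drop the filtration.

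If this persists, I would fall back on a general fact instead of the grading: the filtration could be built by nilpotence of $d$ modulo $\mathfrak m$-adic degree. Concretely, I would take the filtration by $\mathfrak m$-adic order combined with the grading, via the weight $\deg e_j$ on generators, and check strict increase of the quantity $\deg(\text{generator})$ using minimality after a careful sign audit. Once a flag is in hand, the consequence is immediate. Finite-dimensional homology has support $\{\mathfrak m\}$, whose codimension is $d$, so Theorem~\ref{cor:TRC} gives $\rank_S(P)\ge2^d$.
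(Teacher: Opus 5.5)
Your reductions are sound and agree with the paper: splitting off the contractible part, the flags $Se'\subset Se\oplus Se'$ on its rank-two pieces, combining flags on direct summands, and the final appeal to Theorem~\ref{cor:TRC} via $\Supp_SH(P)=\{\mathfrak m\}$. The gap is the minimal summand $Y$, which carries all the content of the lemma. The proposal ends with an unspecified filtration ``by $\mathfrak m$-adic order combined with the grading'' and a deferred sign audit, so no flag on $Y$ is ever produced.

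Your own computation locates the difficulty. If $d_Ye=\sum_jf_je_j$ with $f_j\in\mathfrak m$ homogeneous, then $\deg e_j=\deg e-1-\deg f_j\ge\deg e$, with equality exactly when $f_j$ is a linear form. Hence your descending-degree filtration is preserved, but not dropped, by $d_Y$. On its subquotient in degree $a$, free on a $k$-space $W_a$, the induced differential is $\sum_ix_iL_i$ with $L_i\in\operatorname{End}_k(W_a)$. The grading says nothing about these components. No weight on a fixed homogeneous basis can handle them in general, because a flag may force a change of basis inside $W_a$. For example, take $Y=Se_1\oplus Se_2$ with both generators in degree $0$ and $d_Ye_1=d_Ye_2=x_1(e_1+e_2)$. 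No filtration by a weight on $\{e_1,e_2\}$ is dropped by $d_Y$, yet $S(e_1+e_2)\subset Y$ is a flag.

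The missing idea is the linear algebra of the $L_i$. Comparing the coefficients of $x_i^2$ and $x_ix_j$ in $d^2=0$ gives $L_i^2=0$ and $L_iL_j+L_jL_i=0$, so in characteristic $2$ the $L_i$ are commuting square-zero operators. The paper uses this for the top degree $a$, where $S\cdot W_a$ is itself a dg submodule of $Y$. Each $L_j$ preserves $\ker L_1\cap\cdots\cap\ker L_{j-1}$, and a square-zero operator on a nonzero space has nonzero kernel, so there is $0\ne v\in\bigcap_i\ker L_i$. Then $d_Yv=\sum_ix_iL_iv=0$, so $S\cdot v$ is a graded free summand of rank one with zero differential. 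The quotient $Y/(S\cdot v)$ is minimal and free of rank one less, and induction on the rank gives the flag by adjoining $S\cdot v$ to the preimage of a flag on the quotient.

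Equivalently, you could keep your descending-degree filtration and refine it inside each $W_a$ by the subspaces $\{w: L_{i_1}\cdots L_{i_j}w=0\text{ for all }i_1,\dots,i_j\}$. Each $L_i$ maps each of these into the preceding one. So the kernel-of-powers idea you set aside does work, once it is applied to the operators $L_i$ degree by degree rather than to $d_Y$ itself.
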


\begin{proof}
Homogeneous Gaussian elimination reduces to the minimal summand $Y$: each contractible summand of
rank two removed in the process has a free flag, and free flags on direct summands combine to a free
flag on their direct sum.  Induct on $r=\rank_S(Y)$.  For a homogeneous basis
$b_1,\dots,b_r$, minimality and degree give
$(d_Y)_{ij}\ne0\Rightarrow |b_i|\ge|b_j|$.  Thus, if $W$ is the $k$-span of the basis vectors of
maximal degree and $S\cdot W\subseteq Y$ is the free $S$-submodule they generate, then
$d_Y(S\cdot W)\subseteq S\cdot W$.  Moreover, the entries of the restricted differential lie in
$S_{-1}=\bigoplus_{i=1}^d kx_i$, so
\[
d_Y|_{S\cdot W}=\sum_{i=1}^d x_iL_i,\qquad L_i\in\operatorname{End}_k(W).
\]
Comparing coefficients in $d_Y^2=0$ gives $L_i^2=0$ and, in characteristic $2$,
$L_iL_j=L_jL_i$.  Successively taking kernels shows that these commuting square-zero operators have
a common nonzero kernel; choose $0\ne v\in\bigcap_i\ker L_i$.  Since $W$ is concentrated in one
degree, $S\cdot v$ is a graded direct summand with zero differential, and $Y/(S\cdot v)$ is minimal
and free of rank $r-1$.  By induction, adjoining $S\cdot v$ to the inverse images of a free flag on
$Y/(S\cdot v)$ gives a free flag on $Y$.

If $0<\dim_kH(P)<\infty$, its grading gives $\Supp_SH(P)=\{\mathfrak m\}$, so
$\codim_SH(P)=d$; Theorem~\ref{cor:TRC}, after forgetting the grading, gives
$\rank_S(P)\ge2^d$.
\end{proof}

\begin{proof}[Proof of Corollary~\ref{cor:orbits}]
Set $S=H^*(BE;\Ff_2)=\Ff_2[t_1,\dots,t_d]$, write $X_{hE}$ for the Borel construction of $X$,
and set $H_E^*(X;\Ff_2)=H^*(X_{hE};\Ff_2)$.  Allday and Puppe's construction using singular cochains gives
an $S$-linear differential $D$ on $H^*(X;\Ff_2)\otimes_{\Ff_2}S$ such that the dg $S$-module
\[
 \mathcal H_X=(H^*(X;\Ff_2)\otimes_{\Ff_2}S,D)
\]
has $H^*(\mathcal H_X)\cong H_E^*(X;\Ff_2)$; this is the minimal Hirsch--Brown model, meaning that
$D(\mathcal H_X)\subseteq(t_1,\dots,t_d)\mathcal H_X$.  See
\cite[\S3.11, especially Recollections~(3.11.1)(3)--(4) and the construction preceding
Definition~(3.11.20)]{AlldayPuppe}.  Its underlying $S$-module is free of rank
$\sum_i\dim_{\Ff_2}H^i(X;\Ff_2)=\sum_i\dim_{\Ff_2}H_i(X;\Ff_2)$.
The construction requires neither a cellular action nor triviality of the induced action on
cohomology.
After negating degrees, so that $D$ and the $t_i$ have degree $-1$, $\mathcal H_X$ is a finite free dg
$S$-module with nonzero finitely generated homology.  Lemma~\ref{lem:flag} supplies a free flag on
$\mathcal H_X$, so
Theorem~\ref{cor:TRC} gives
\[
 \sum_i\dim_{\Ff_2}H_i(X;\Ff_2)\ge2^{\codim_S H_E^*(X;\Ff_2)}.
\]
Quillen's dimension theorem~\cite[Part~I, Theorem~7.7]{Quillen} gives
\[
 \dim_S H_E^*(X;\Ff_2)
 =\max\{\rank H:H\le E,\ X^H\ne\varnothing\}=s.
\]
Thus the codimension is $d-s$, and orbit-stabilizer gives
$2^{d-s}=\min_{x\in X}|E\cdot x|$.  When the action is free, $s=0$, giving Carlsson's conjecture.
For the inequality concerning $G$, choose $E\le G$ of rank $r_2(G)$.  Since $E_x=E\cap G_x$, one
has $\max_{x\in X}\rank E_x\le\max_{x\in X}r_2(G_x)$; applying the preceding bound to the
restricted $E$-action gives the stated inequality.
\end{proof}

\begin{proof}[Proof of Corollary~\ref{cor:Carlsson}]
First let $E=(\Z/2)^d$.  Carlsson's Koszul duality identifies the asserted lower bound for $E$ with
the statement that every finite free dg $S$-module with nonzero finite-dimensional homology has rank at
least $2^d$, which is Lemma~\ref{lem:flag}.  More precisely, after splitting off contractible
summands from the $kE$-complex and its corresponding dg $S$-module, the equivalence identifies the total
$k$-dimension of the homology of the complex with the $S$-rank of the dg module, and it preserves
nontriviality and finite-dimensionality of homology.  See
\cite[Theorem~II.7 and Proposition~II.9]{Carlsson}, stated over $\Ff_2$,
and~\cite[Remark~6.2]{VW} for general $k$.

For a finite group $G$, choose $E\le G$ of rank $r_2(G)$.  Since $kG$ is free over $kE$, restriction
carries projective $kG$-modules to projective, hence free, $kE$-modules.  It does not change the
underlying homology vector spaces, so the $E$-case gives the lower bound.

For sharpness, put $r=r_2(G)$.  If $r=0$, then $|G|$ is odd, so Maschke's theorem makes the complex
$k$ concentrated in degree $0$ projective.  Suppose $r>0$.  By finite generation of group
cohomology~\cite[Corollary~6.2]{Evens} and
Quillen's dimension theorem~\cite[Part~I, Theorem~7.7]{Quillen},
$\dim_{\mathrm{Krull}}H^*(G;k)=r$; choose a homogeneous system of
parameters.  The Benson--Carlson parameter construction~\cite[\S4, especially
Theorem~4.1]{BensonCarlson}, written out in~\cite[proof of the main theorem]{CarlsonRanks}, produces a
bounded complex of projective $kG$-modules whose homology, as a graded $k$-vector space, is that of a
product of $r$ spheres, and hence has total dimension $2^r$.  This construction is valid in
characteristic $2$; the hypothesis of odd characteristic in~\cite{CarlsonRanks} enters only the
subsequent mapping cone construction.  Finally, the restriction of a free $G$-action to $E$ is
free, so the topological assertion of Corollary~\ref{cor:Carlsson} follows from
Corollary~\ref{cor:orbits} with $s=0$.
\end{proof}
\section{The filtration spectral sequence}\label{sec:hypertor}

This section proves Theorem~\ref{thm:main} in the slightly more general form of
Theorem~\ref{thm:flag}.
The conventions of Section~\ref{sec:deduce} remain in force.  Fix a differential $A$-module $P$
with a free flag and a finite free resolution $F_\bullet\to H(P)$, with differentials
$\delta_q\colon F_q\to F_{q-1}$.  Lemma~\ref{lem:filtration} replaces $P$, up to
quasi-isomorphism, by a filtered differential module $X$ with $X^q/X^{q-1}\cong F_q$, zero
differential on each subquotient, and $d_X$ inducing $\delta_q$ on adjacent subquotients.  By
Lemmas~\ref{lem:contract} and~\ref{lem:htpy}, the
quasi-isomorphism $X\to P$ induces a homotopy equivalence
$\Tate_A(X)\simeq\Tate_A(P)$.  The exact homology sequences supplied by
Lemma~\ref{lem:htpy} for the filtration of $X$ form a finite exact couple and hence a spectral
sequence converging to $H(\Tate_A(P))$.
Lemma~\ref{lem:zerodiff} identifies the terms on its first page with $A_F\otimes_AF_q$, and the proof of
Theorem~\ref{thm:flag} identifies its first differential with $1\otimes\delta_\bullet$.  Under the
flatness hypothesis of Theorem~\ref{thm:flag}, the sequence therefore collapses at the second page.
Remark~\ref{rem:abc} relates this
filtration spectral sequence to the Adams spectral sequence.

\begin{lemma}\label{lem:filtration}
Let $P$ be a finite projective differential $A$-module whose homology admits a finite resolution
\[
0\to F_m\xrightarrow{\ \delta_m\ }F_{m-1}\xrightarrow{\ \delta_{m-1}\ }\cdots
\xrightarrow{\ \delta_1\ }F_0\to H(P)\to0
\]
by finite free modules.  Then there is a finite free differential $A$-module $X$ quasi-isomorphic to
$P$, with a finite filtration $0=X^{-1}\subseteq X^{0}\subseteq\cdots\subseteq X^{m}=X$ satisfying
$d_X(X^{q})\subseteq X^{q-1}$, together with isomorphisms $X^{q}/X^{q-1}\cong F_q$ for
$0\le q\le m$, under which the map $X^{q}/X^{q-1}\to X^{q-1}/X^{q-2}$ induced by $d_X$ equals
$\delta_q$ for every $q\ge1$.
\end{lemma}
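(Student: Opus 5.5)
The plan is to build $X$ by induction on $q$, as a finite free module $X^q=F_0\oplus\cdots\oplus F_q$ whose differential is block upper-triangular, together with morphisms of differential modules $\varphi_q\colon X^q\to P$ compatible with the inclusions. On each summand the differential will be $\delta_q$ plus correction terms landing in $X^{q-2}$. Once the construction is done, the filtration, the condition $d_X(X^q)\subseteq X^{q-1}$, the isomorphisms $X^q/X^{q-1}\cong F_q$ with zero induced differential, and the identification of the induced map with $\delta_q$ all hold by design. It then remains only to show that $\varphi=\varphi_m$ is a quasi-isomorphism.

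\emph{Base step.} Since $F_0$ is free and $F_0\to H(P)=\ker d_P/\im d_P$ is surjective, lift it to a map $\varphi_0\colon F_0\to\ker d_P$. Set $X^0=F_0$ with zero differential.

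\emph{Inductive step.} For each basis vector $e$ of $F_q$ with $q\ge1$, I need an element $y_e\in X^{q-1}$ such that
\begin{itemize}
\item[(i)] $y_e\equiv\delta_q e \pmod{X^{q-2}}$,
\item[(ii)] $d_Xy_e=0$,
\item[(iii)] $\varphi_{q-1}(y_e)\in\im d_P$.
\end{itemize}
Then I define $d_Xe=y_e$, which keeps $d_X^2=0$. I choose $\varphi_q(e)$ with $d_P\varphi_q(e)=\varphi_{q-1}(y_e)$, which makes $\varphi_q$ a morphism.

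To guarantee that such $y_e$ exist, I carry the following lifting statement through the induction.
\begin{quote}
$(L_{q-1})$: every $z\in\ker\delta_{q-1}\subseteq F_{q-1}$ (for $q-1=0$, every $z$ in the kernel of $F_0\to H(P)$) lifts to some $y\in X^{q-1}$ satisfying (i)--(iii) with $z$ in place of $\delta_q e$.
\end{quote}
This is the step I expect to be the main obstacle. To prove $(L_q)$ from the earlier stages, take $z\in\ker\delta_q$. Its naive lift $z\in F_q\subseteq X^q$ has $d_Xz\in X^{q-1}$, and this element is $\equiv\delta_q z=0$ modulo $X^{q-2}$. So $d_Xz$ is a cycle lying in $X^{q-2}$.

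I then correct $z$ by elements of $X^{q-1}$, working one filtration step at a time. At each step I use exactness of $F_\bullet$ at the relevant spot to write the leading component of the obstruction as some $\delta(w)$. Subtracting the already-constructed lift of $w$ pushes the obstruction one step lower in the filtration. At the bottom, the last obstruction sits in $F_0$ and maps to $0$ in $H(P)$. There I use exactness at $F_0$, i.e.\ $\ker(F_0\to H(P))=\im\delta_1$, together with condition (iii) for the $\varphi$-part.

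The delicate point is the bookkeeping for condition (iii) and for $d_X$ of the corrections. I would organize it as a descending induction on the filtration index of the error term. If this becomes unwieldy, a fallback is to phrase the same argument through the mapping cone of $\varphi_{q-1}$.

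\emph{Quasi-isomorphism.} The finite filtration of $X$ gives a finite exact couple, as in the discussion preceding this lemma. Its first page is $\bigoplus_q F_q$ with differential $\delta_\bullet$. Its second page is therefore $H(P)=\operatorname{coker}\delta_1$, concentrated in filtration $0$; in particular $\delta_m$ is injective, so nothing survives in higher filtration. Hence the spectral sequence degenerates and $H(X^0)=F_0\to H(X)$ is surjective with kernel $\im\delta_1$. Since $\varphi_0$ induces the given map $F_0\to H(P)$, the map $H(\varphi)$ is an isomorphism.
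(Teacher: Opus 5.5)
Your proposal is correct and builds the same object as the paper; what differs is how the two key facts are verified. The paper sets $P_{q+1}=P_q\oplus F_q$ with $d(x,f)=(d_{P_q}x+\lambda_qf,0)$ for a cycle lift $\lambda_q\colon F_q\to P_q$ (called $\varphi_q$ there). So $P_q$ is the mapping cone of your $\varphi_{q-1}\colon X^{q-1}\to P$, and the two components of $\lambda_q$ are your $\varphi_q|_{F_q}$ and $d_X|_{F_q}$. Your $(L_{q-1})$ is then the surjectivity half of the paper's identification $H(P_q)\cong I_q$ by projection to $F_{q-1}$.

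The paper carries the full identification, injectivity included, and gets it from a single exact homology sequence. For $0\to P_q\to P_{q+1}\to F_q\to0$ the connecting map is the surjection $\alpha_q$, so $H(P_{q+1})\to F_q$ is injective with image $\ker\alpha_q=I_{q+1}$. Injectivity makes each new lift immediate: if $\delta_qz=0$, then $\lambda_qz$ is a cycle with zero projection, hence $\lambda_qz=d_{P_q}u$, and $(u,z)$ is the cycle you want. The quasi-isomorphism also comes for free, since $P_{m+1}$ is acyclic and is the cone of $\psi=\varphi_m$.

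Carrying only surjectivity, you must re-derive this through the descending correction, and you then need the filtration spectral sequence for the quasi-isomorphism. Both of your steps are sound. The top component of a cycle in $X^j$ with $j\ge1$ lies in $\ker\delta_j=\im\delta_{j+1}$, so adding a naive lift $w\in F_{j+1}$ pushes the obstruction down. Your $E^2$ computation, and your comparison of $F_0\to H(X)$ with $F_0\to H(P)$, are also correct. Your version gives an explicit algorithm for the lifts and a quasi-isomorphism criterion that depends only on the filtration and on $\varphi|_{F_0}$ lifting the augmentation. The paper's version, which is your stated fallback, avoids both the bookkeeping and the spectral sequence.

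One step of your sketch needs to be stated, because exactness at $F_0$ does not supply it. Push the obstruction into $F_0$ and kill it: it maps to $0$ in $H(P)$ since $\varphi_0(c)=d_P\varphi_q(y)$, so $c=\delta_1w=d_Xw$ with $w\in F_1$. The corrected $y$ is then a cycle, but $\varphi_q(y)$ is merely a cycle of $P$, and its class need not vanish. This already happens for $q=1$, where each $\varphi_1(e)$ is determined only up to cycles of $P$. You obtain condition (iii) by adding $v\in F_0$ with $[\varphi_0(v)]=[\varphi_q(y)]$. Such a $v$ exists because $F_0\to H(P)$ is surjective. Adding it preserves (i) and (ii), because $F_0\subseteq X^{q-1}$ for $q\ge1$ and $d_X$ vanishes on $F_0$.
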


\begin{proof}
The following construction is the finite free specialization of Brown--Erman's construction
\cite[Theorem~3.2]{BrownErman}; see also
\cite[Theorems~2.23 and~2.26]{VandeBogertFlagged}.  We give it explicitly because its filtration is
used in the proof of Theorem~\ref{thm:flag}.
Put $I_0=H(P)$, $I_q=\im\delta_q$ for $1\le q\le m$, and $I_{m+1}=0$.  Starting with $P_0=P$,
construct $P_q$ so that, for $q\ge1$, the projection $\pi_{q-1}\colon P_q\to F_{q-1}$ identifies
$H(P_q)$ with $I_q$.
Let $\alpha_0\colon F_0\to I_0$ be the augmentation and, for $q\ge1$, let
$\alpha_q\colon F_q\to I_q$ be $\delta_q$ with codomain restricted to its image.  Choose a cycle lift
$\varphi_q\colon F_q\to P_q$ of $\alpha_q$, with $\pi_{q-1}\varphi_q=\delta_q$ for $q\ge1$, and set
\[
 P_{q+1}=P_q\oplus F_q,\qquad d_{P_{q+1}}(x,f)=(d_{P_q}x+\varphi_qf,\,0).
\]
The exact homology sequence for $0\to P_q\to P_{q+1}\to F_q\to0$ identifies $H(P_{q+1})$, via the
projection $\pi_q\colon P_{q+1}\to F_q$, with $\ker\alpha_q=I_{q+1}$.  Thus $P_{m+1}$ is acyclic.
Unwinding,
$P_{m+1}=P\oplus F_0\oplus\cdots\oplus F_m$ as a module, with
differential sending $x\in P$ to $d_Px$ and $f\in F_q$ to
$\varphi_qf\in P\oplus F_0\oplus\cdots\oplus F_{q-1}$.  The quotient $X=P_{m+1}/P$, filtered by
$X^{q}=F_0\oplus\cdots\oplus F_q$, is therefore finite free with $d_X(X^{q})\subseteq X^{q-1}$ and with
$\pi_{q-1}\varphi_q=\delta_q$ induced on subquotients.  Finally, the map $\psi\colon X\to P$
collecting the $P$-components of the $\varphi_q$ is a morphism, because the differential of
$P_{m+1}$ squares to zero, and $P_{m+1}$ is its mapping cone.  Since $H(P_{m+1})=0$, the exact
homology sequence for $0\to P\to P_{m+1}\to X\to0$ shows that $\psi$ is a quasi-isomorphism.
\end{proof}

\begin{remark}
For a bounded complex, the filtered module of Lemma~\ref{lem:filtration} may be obtained by
totalizing the Cartan--Eilenberg construction of~\cite[\S5.7]{Weibel} and
\cite[Example~2.27]{VandeBogertFlagged} by direct sums and then forgetting the grading.  In the
homotopy category, this filtration and its subquotients constitute an Adams resolution in the sense
of Gugenheim--May~\cite{GugenheimMay} and
\cite[Definition~10.2, Theorem~11.1]{BarthelMayRiehl}.
\end{remark}

\begin{lemma}\label{lem:zerodiff}
Let $V$ be a finite free $A$-module, regarded as a differential module with zero differential.  There
is a natural isomorphism
\[
\gamma_V\colon A_F\otimes_AV\ \xrightarrow{\ \cong\ }\ H\bigl(\Tate_A(V)\bigr),\qquad
\gamma_V(1\otimes v)=[v\otimes v].
\]
\end{lemma}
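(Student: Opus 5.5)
The plan is to compute $H(\Tate_A(V))$ explicitly in a basis and then show that the formula $1\otimes v\mapsto[v\otimes v]$ is well defined, $A$-linear for the twisted structure, natural, and bijective. Since $d_V=0$, the Tate differential on $V\otimes V$ is simply $1+\tau$. Choose a basis $e_1,\dots,e_n$ of $V$. Then $V\otimes V$ splits as the direct sum of the rank-one submodules $A(e_i\otimes e_i)$, on which $1+\tau$ vanishes, and the rank-two submodules $A(e_i\otimes e_j)\oplus A(e_j\otimes e_i)$ for $i<j$, on which $1+\tau$ is the matrix $\begin{pmatrix}1&1\\1&1\end{pmatrix}$. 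On each rank-two block the kernel and the image both equal $A(e_i\otimes e_j+e_j\otimes e_i)$, because $(a,b)\mapsto(a+b,a+b)$ and $2=0$; so these blocks are acyclic. It follows that $H(\Tate_A(V))$ is free on the classes $[e_i\otimes e_i]$. Moreover, $\ker(1+\tau)$ is the module of symmetric tensors and $\im(1+\tau)$ is spanned by the elements $x\otimes y+y\otimes x$.

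Next I would check that the map is well defined on $A_F\otimes_AV$. Every diagonal tensor $v\otimes v$ is a cycle. The map $q(v)=[v\otimes v]$ is additive, because $(v+w)^{\otimes2}-v^{\otimes2}-w^{\otimes2}=v\otimes w+w\otimes v=(1+\tau)(v\otimes w)$ is a boundary. It also satisfies $q(av)=a^2q(v)$. Together these say precisely that $a'\otimes v\mapsto a'\,q(v)$ is balanced for the Frobenius-twisted right action, so it defines an $A$-linear map $\gamma_V$ on $A_F\otimes_AV$.

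For bijectivity, $A_F\otimes_AV$ is free on the elements $1\otimes e_i$, and $\gamma_V$ sends these to the basis $[e_i\otimes e_i]$ found above, so $\gamma_V$ is an isomorphism. For naturality, a linear map $f\colon V\to W$ induces $\Tate_A(f)=f\otimes f$, which sends $v\otimes v$ to $f(v)\otimes f(v)$; this matches $1\otimes f$ on the Frobenius side. Since the whole argument is a direct basis computation, I expect no real obstacle. The only point needing care is the balancedness check, because that is where the characteristic-two identity $2=0$ and the Frobenius twist enter.
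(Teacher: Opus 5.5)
Your proof is correct and follows essentially the same route as the paper: additivity of $v\mapsto[v\otimes v]$ modulo $\im(1+\tau)$, together with Frobenius semilinearity, gives the $A$-linear map $\gamma_V$. The basis computation then shows it is an isomorphism, since the off-diagonal blocks are acyclic and the diagonal classes $[e_i\otimes e_i]$ form a basis of the homology. Your explicit $2\times2$ block check just spells out a step the paper states in one line.
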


\begin{proof}
Since $V$ has zero differential, $\Tate_A(V)=(V\otimes V,1+\tau)$, and hence
$H(\Tate_A(V))=N(V)$ with $N(V)=\ker(1+\tau)/\im(1+\tau)$.  The assignment $v\mapsto[v\otimes v]$
is additive into $N(V)$, because $(v+w)\otimes(v+w)+v\otimes v+w\otimes w=(1+\tau)(v\otimes w)$, and
Frobenius semilinear, because $(av)\otimes(av)=a^{2}(v\otimes v)$.  It therefore induces the
$A$-linear map $\gamma_V$.  For a basis $e_1,\dots,e_r$ of $V$, the orbit sums
$e_i\otimes e_j+e_j\otimes e_i$ $(i<j)$ span $\im(1+\tau)$, while the diagonal tensors represent the
remaining classes in $\ker(1+\tau)$.  Thus the classes $[e_i\otimes e_i]$ form a basis of $N(V)$.
They are the images of the basis $1\otimes e_i$ of $A_F\otimes V$, so $\gamma_V$ is an isomorphism,
natural in $V$ by construction.
\end{proof}

\begin{remark}
The isomorphism of Lemma~\ref{lem:zerodiff} does not arise from a diagonal map
$A_F\otimes V\to V\otimes V$: the assignment $v\mapsto v\otimes v$ is Frobenius semilinear in the
scalars but not additive, so it induces no $A$-linear map $A_F\otimes V\to V\otimes V$; additivity
appears only after passing to the quotient $N(V)$.
\end{remark}

\begin{theorem}\label{thm:flag}
Let $A$ be a commutative Noetherian ring with $2=0$ whose Frobenius bimodule $A_F$ is flat as a
right $A$-module, and let $P$ be a differential $A$-module with a free flag whose homology admits a
finite resolution by finite free modules.  Then
\[
H\bigl(\Tate_A(P)\bigr)\ \cong\ A_F\otimes_AH(P).
\]
\end{theorem}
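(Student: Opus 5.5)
The plan follows the outline at the start of this section.

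\emph{Reduction to a filtered model.} Apply Lemma~\ref{lem:filtration} to $P$. This produces a finite free differential module $X$ with filtration $X^0\subseteq\cdots\subseteq X^m=X$ and a quasi-isomorphism $\psi\colon X\to P$. The flags on $X$ (given by $X^q$) and on $P$ are free. By Lemma~\ref{lem:contract}, $\psi$ is a homotopy equivalence, and by Lemma~\ref{lem:htpy}, $\Tate_A(\psi)$ is a homotopy equivalence. So it suffices to compute $H(\Tate_A(X))$. Each short exact sequence $0\to X^{q-1}\to X^q\to F_q\to0$, with $F_q$ carrying zero differential, is split as modules, so Lemma~\ref{lem:htpy} gives long exact sequences
\[
\cdots\to H(\Tate_A(X^{q-1}))\to H(\Tate_A(X^q))\to H(\Tate_A(F_q))\xrightarrow{\ c_q\ } H(\Tate_A(X^{q-1}))\to\cdots.
\]
Lemma~\ref{lem:zerodiff} identifies $H(\Tate_A(F_q))$ with $A_F\otimes F_q$.

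\emph{Identifying the first differential.} The key computation is that the composite
\[
A_F\otimes F_q\cong H(\Tate_A(F_q))\xrightarrow{c_q}H(\Tate_A(X^{q-1}))\to H(\Tate_A(F_{q-1}))\cong A_F\otimes F_{q-1}
\]
equals $1\otimes\delta_q$. To check this, lift $f\in F_q$ to $\tilde f\in X^q$. The element $\tilde f\otimes\tilde f$ is a lift of the cycle $f\otimes f$, and its Tate boundary is
\[
(\partial+(1+\tau))(\tilde f\otimes\tilde f)=d\tilde f\otimes\tilde f+\tilde f\otimes d\tilde f,
\]
which equals $(1+\tau)(d\tilde f\otimes\tilde f)$. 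This expression is not in $\Tate_A(X^{q-1})$ directly, so one has to be careful. In the middle subquotient of the exactness filtration in Lemma~\ref{lem:htpy}, the term $d\tilde f\otimes \tilde f$ is exactly what the contraction $h$ handles. Adjusting the lift to $\tilde f\otimes\tilde f+d\tilde f\otimes\tilde f$ should give the connecting class
\[
d\tilde f\otimes d\tilde f+(\text{terms in }X^{q-2}\text{-filtration}),
\]
which maps to $[\delta_qf\otimes\delta_qf]=\gamma(1\otimes\delta_qf)$. I expect this bookkeeping of the connecting map through the quasi-isomorphism $\Tate_A(Y)/\Tate_A(X)\to\Tate_A(Z)$ to be the main obstacle. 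I would verify it on a single generator, using naturality of $\gamma$ and additivity mod $\im(1+\tau)$.

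\emph{Collapse.} The exact couple built from these sequences yields a finite spectral sequence with $E^1_q=A_F\otimes F_q$ and $d^1=1\otimes\delta$, converging to $H(\Tate_A(X))$. Since $A_F$ is flat, $E^2_q=\Tor_q^A(A_F,H(P))$ vanishes for $q>0$ and equals $A_F\otimes H(P)$ for $q=0$. Everything is then concentrated in one column, so all higher differentials vanish. The abutment is $E^2_0$, which sits as the image of $H(\Tate_A(X^0))=A_F\otimes F_0$. Tracking that this image is the cokernel of $1\otimes\delta_1$ gives the isomorphism $H(\Tate_A(P))\cong A_F\otimes_A H(P)$, induced by $[x\otimes x]\mapsto 1\otimes[x]$. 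Alternatively, to avoid spectral sequence language, I would induct on $q$ and show directly that $H(\Tate_A(X^q))\cong\operatorname{coker}(1\otimes\delta_1)\oplus\ker(1\otimes\delta_q)$, i.e.\ the homology of the truncated complex $A_F\otimes F_{\le q}$. Exactness of $A_F\otimes F_\bullet$ in positive degrees would then make the connecting maps surjective or injective as needed.
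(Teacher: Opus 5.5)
Your proposal is correct and follows the paper's proof essentially step for step. It uses the same reduction to the filtered model of Lemma~\ref{lem:filtration} via Lemmas~\ref{lem:contract} and~\ref{lem:htpy}, the same exact couple with $E^1_q\cong A_F\otimes F_q$, the same adjusted lift $\tilde f\otimes\tilde f+d\tilde f\otimes\tilde f$ to identify $d^1$, and the same collapse by flatness.

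The bookkeeping you flagged as the main obstacle is clean. One has $D(\tilde f\otimes\tilde f+d\tilde f\otimes\tilde f)=d\tilde f\otimes d\tilde f$ exactly, with no $X^{q-2}$ correction terms. Since the adjusted element is already a cycle in $\Tate_A(X^q)/\Tate_A(X^{q-1})$ whose image in $\Tate_A(F_q)$ is $f\otimes f$, no appeal to the contraction $h$ is needed.
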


\begin{proof}
The differential module $X$ of Lemma~\ref{lem:filtration} is quasi-isomorphic to $P$, and both admit free
flags; the quasi-isomorphism between them is therefore a homotopy equivalence
(Lemma~\ref{lem:contract}), and $\Tate_A$ preserves homotopy equivalences
(Lemma~\ref{lem:htpy}).  Replacing $P$ by $X$, we may assume that $P$ carries the filtration
constructed in Lemma~\ref{lem:filtration}, namely
$0=P^{-1}\subseteq P^{0}\subseteq\cdots\subseteq P^{m}=P$ with subquotients
$P^{q}/P^{q-1}=F_{q}$ and induced maps $\delta_{q}$.

By Lemma~\ref{lem:htpy}, applying
$\Tate_A$ to the short exact sequences $0\to P^{q-1}\to P^{q}\to F_{q}\to0$ gives exact homology
sequences.  These form a finite exact couple and hence a spectral sequence of $A$-modules, singly
graded by the filtration degree $q$, converging to $H(\Tate_A(P))$.  Its first page is
$E^1_{q}=H(\Tate_A(F_{q}))$ with differential
$d^1\colon E^1_{q}\to E^1_{q-1}$ induced by the connecting maps, and Lemma~\ref{lem:zerodiff} gives
$E^1_{q}\cong A_F\otimes F_q$, naturally in $F_{q}$.

We claim that, under the isomorphisms $\gamma_{F_q}$ of Lemma~\ref{lem:zerodiff},
$d^1=A_F\otimes_A\delta_q$.  Write $D=\partial+(1+\tau)$.  If
$\tilde v\in P^{q}$ lifts $v\in F_q$, then the cross terms cancel because $2=0$ and
\[
D\bigl(\tilde v\otimes\tilde v+d_P\tilde v\otimes\tilde v\bigr)
 =d_P\tilde v\otimes d_P\tilde v.
\]
Thus the element on the left is a cycle modulo $\Tate_A(P^{q-1})$ representing
$[v\otimes v]$, and the right side is a cycle in $\Tate_A(P^{q-1})$ whose image in
$\Tate_A(F_{q-1})$ is $\delta_qv\otimes\delta_qv$.  Under the identifications $\gamma$ of
Lemma~\ref{lem:zerodiff}, the differential is therefore $d^1=A_F\otimes\delta_q$.  Hence
$(E^1_{\bullet},d^1)$ is exactly $A_F\otimes_A-$ applied to the chosen resolution:
\[
\cdots\longrightarrow A_F\otimes F_q\xrightarrow{\ 1\otimes\delta_q\ }A_F\otimes F_{q-1}
\longrightarrow\cdots\longrightarrow A_F\otimes F_0.
\]
Thus $E^2_{q}\cong\Tor^A_q(A_F,H(P))$.  Flatness of $A_F$ makes these vanish for $q>0$,
while $E^2_{0}=A_F\otimes H(P)$.  Since the second page is concentrated in filtration degree $0$,
the sequence collapses and yields the isomorphism in the theorem.
\end{proof}

\begin{proof}[Proof of Theorem~\ref{thm:main}]
Since $R$ is regular local, the finitely generated module $H(P)$ admits a finite free resolution,
and the Frobenius of $R$ is flat by Kunz's theorem; Theorem~\ref{thm:flag} gives
$H(\Tate_R(P))\cong\Rone\otimes_RH(P)$, and flatness identifies $\Rone\otimes_RH(P)$ with
$H(\Frob P)$.  When $H(P)$ has finite length, Lemma~\ref{lem:scaling} gives
$\htot(\Tate_R(P))=2^{d}\,\htot(P)$.
\end{proof}

\begin{remark}\label{rem:abc}
The spectral sequence of Theorem~\ref{thm:flag} is the Adams spectral sequence for the projective
class generated by finite free modules with zero differential; its \emph{ghosts} are the morphisms
that induce zero on homology.  In the triangulated formalism, this is an Adams spectral sequence
associated to a projective class; see Adams~\cite{Adams}, Brinkmann~\cite{Brinkmann},
Christensen~\cite{Christensen}, and Meyer~\cite{Meyer}.  Christensen
\cite[\S8, Proposition~8.2]{Christensen} identifies the ghost
projective class, while Meyer~\cite[Theorem~4.3]{Meyer} identifies the second page with the left
derived functors, here $\Tor^A_q(A_F,H(P))$.  The finite filtration of
Lemma~\ref{lem:filtration} shows that $P$ belongs to the thick subcategory generated by finite free
modules with zero differential, so convergence follows from
Meyer~\cite[Proposition~4.5]{Meyer}.  Over an abelian category the sequence specializes to
Grothendieck's hyperhomology spectral sequence~\cite[\S6]{Meyer}; the finite filtration used here
makes both the sequence and its convergence explicit.
\end{remark}

\begin{remark}
Flatness of $A_F$ enters the proof of Theorem~\ref{thm:flag} only through the vanishing of
$\Tor^A_q(A_F,H(P))$ for $q>0$.  Over a Noetherian local ring $R$ of characteristic $2$, if the
homology $H(P)$ has finite projective dimension, then the resolution required by
Lemma~\ref{lem:filtration} exists and Peskine--Szpiro's acyclicity theorem for the
Frobenius~\cite[Chapter~I, Theorem~(1.7)]{PeskineSzpiro} gives this vanishing, so the spectral
sequence of Theorem~\ref{thm:flag} computes
$H(\Tate_R(P))\cong\Rone\otimes_RH(P)$.  Thus, in this argument, regularity is used only to ensure
that $H(P)$ has finite projective dimension.  This extension beyond regular rings is not needed for
Theorem~\ref{thm:rank}.
\end{remark}

\begin{example}
Regularity, equivalently flatness of the Frobenius, is essential to Theorem~\ref{thm:main}: over a
singular ring one can have $\htot(\Tate_R(P))<\htot(F_R^*P)$.  Let
\[
R=\Ff_2[a_1,a_2,a_3,b_1,b_2,b_3]\big/\bigl(a_1b_1+a_2b_2+a_3b_3,\ a_i^2,\ b_i^2\bigr),
\]
the exterior algebra $\bigwedge V$ of an even-dimensional $\Ff_2$-vector space $V$, modulo its standard
symplectic form $\omega=\sum_i a_i\wedge b_i$; here $\dim V=6$ and $\length_R R=36$.
The defining relation $\sum_{i=1}^3 a_ib_i=0$ makes
$P=R\xleftarrow{(a_i)}R^{3}\xleftarrow{(b_i)^{\mathsf T}}R$ a complex.  Every
variable squares to zero, so $\Frob P$ has zero differential, and a computation in
Macaulay2~\cite{M2} gives
\[
\htot(\Frob P)=180,\qquad \htot(P\otimes_R P)=266,\qquad \htot(\Tate_R(P))=178.
\]
Thus $\htot(\Tate_R(P))=178\ne180=\htot(\Frob P)$, so the conclusion of Theorem~\ref{thm:main} fails
over this singular ring; here $H(P)$ has infinite projective dimension.  The inequality
$\htot(\Frob P)\le\htot(P\otimes_R P)$ nonetheless holds, $180\le266$.  For
\[
 R'=\Ff_2[a_1,\ldots,a_4,b_1,\ldots,b_4]/\bigl(\textstyle\sum_{i=1}^4a_ib_i,\ a_i^2,\ b_i^2\bigr),
 \qquad P'=R'\xleftarrow{(a_i)}(R')^4\xleftarrow{(b_i)^{\mathsf T}}R',
\]
the computation gives $\htot(\Tate_{R'}(P'))=796\ne816=\htot(F_{R'}^*P')$.
\end{example}
\raggedbottom

\end{document}